\newcommand{\sF}{\mathcal{F}}
\newcommand{\sO}{\mathcal{O}}
\newcommand{\bQ}{\mathbb{Q}}
\newcommand{\oZ}{\overline{Z}}
\DeclareMathOperator{\var}{var}
\DeclareMathOperator{\Supp}{Supp}
\begin{document}
\title{Ampleness of CM line bundle on the moduli space of canonically polarized varieties}

\author{Zsolt Patakfalvi and Chenyang Xu}
\date{\today}

\maketitle
\begin{abstract}We prove that the CM line bundle is ample on the proper moduli space which parametrizes KSBA stable varieties. 
\end{abstract}
\tableofcontents
\section{Introduction}
Through the note, we work over a ground field $k$  of characteristic zero. The moduli space  $M^{\rm can}$ of canonically polarized manifolds as well as  its natural geometric compactification have attracted considerable interest over the past few decades. It has been shown that geometric invariant theory (GIT) can be used to construct a moduli space of $M^{\rm can}$ (cf. \cite{Vie95, Don01}). However,  applying the GIT methods to construct a natural compactification is considerably more challenging  in higher dimension than in dimension 1 (see \cite{WX} for some new difficulties arising). On the other hand,  an alternative approach which uses the minimal model program (MMP) theory was first outlined in \cite{KSB} (see also \cite{Ale96}). Based on the recent progress in MMP, this strategy  turns out to give a satisfying compacitification ${M}^{\rm ksba}$. Although the coarse moduli space ${M}^{\rm ksba}$ first only exists as an algebraic space, \cite{Kollar90} has  developed a strategy to verify its 
projectivity, which was later completed in \cite{Fujino12} for the case of varieties and in \cite{KP} 
for pairs. 

We can apply the Knudson-Mumford's determinant construction to the sequence of ample line bundles on $M^{\rm ksba}$ constructed by Koll\'ar. The coefficient of the leading term is the CM line bundle (see Section \ref{ss-CM}), which was first introduced in \cite{Tian97} and later formulated in this way in \cite{PT10}. The curvature calculation on the Weil-Petersson metric of the CM line bundle suggests that it is ample. However, due to the presence of  possibly singular fibers, this is only completely worked out for $M^{\rm can}$ (see \cite{Sch12}). 

In this note we give a purely algebraic proof of the fact that the CM line bundle is ample on $M^{\rm ksba}$. 
Our approach is inspired by the recent interplay of studying $M^{\rm ksba}$ from both the algebraic and differential geometry view points, especially the equivalence of KSBA stability and K-stability for canonically polarized varieties (see \cite{Od10}). 

\begin{theorem}\label{thm-main}
The CM line bundle is ample on the KSBA moduli space. 
\end{theorem}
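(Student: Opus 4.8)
The plan is to deduce ampleness from the Nakai--Moishezon criterion, which is available since $M^{\rm ksba}$ is already known to be a projective variety (\cite{Fujino12}, \cite{KP}). Thus it suffices to show $\big(\lambda_{\rm CM}^{\dim Z}\cdot Z\big)>0$ for every subvariety $Z\subseteq M^{\rm ksba}$ of dimension $\ge 1$. Since the CM line bundle is compatible with base change, and since --- after passing to a finite cover and using the seminormality of the KSBA moduli functor (\cite{Kollar90}) --- every such $Z$ is dominated by the base of an honest flat family $f\colon X\to B$ of KSBA stable $n$-folds whose moduli map $B\to M^{\rm ksba}$ is generically finite onto $Z$, the statement reduces to: for every such family with $\dim B=d$ and $\mathrm{Var}(f)=d$, one has $\big(\lambda_{\rm CM}(X/B)^{d}\big)_B>0$. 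The first step is the structural reduction: because the fibers are canonically polarized, the Knudsen--Mumford expansion of $\det f_{!}\omega_{X/B}^{[m]}$ together with Grothendieck--Riemann--Roch identifies $\lambda_{\rm CM}(X/B)$, up to a positive universal rational multiple, with the pushforward $f_{*}\big(c_1(\omega_{X/B})^{\,n+1}\big)$ (compare \cite{PT10}), so everything is reduced to a positivity statement for this $\mathbb{Q}$-divisor on $B$.

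The next step is to upgrade semipositivity to positivity of this pushforward. Replacing $X$ by the relative canonical model over $B$ (possible since $\omega_{X/B}$ is $f$-ample) and invoking the semipositivity of the relative canonical class for families of KSBA stable varieties over complete bases (\cite{Fujino12}, \cite{KP}, building on Viehweg's weak positivity \cite{Vie95}), I would arrange $\omega_{X/B}$ to be nef on the total space $X$. Already this gives, for a curve $C\subseteq B$, that $\big(\lambda_{\rm CM}(X/B)\cdot C\big)=c_n\big(\omega_{X_C/C}^{\,n+1}\big)\ge 0$ with $X_C=f^{-1}(C)$, so $\lambda_{\rm CM}$ is nef on $M^{\rm ksba}$; it also shows that $f_{*}\big(c_1(\omega_{X/B})^{\,n+1}\big)$ is a nef divisor on $B$.

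For strict positivity I would use that maximal variation forces $\omega_{X/B}$ to be big: since $\mathrm{Var}(f)=d$, Viehweg's theory gives that $\det f_{*}\omega_{X/B}^{[m]}$ is big on $B$ for $m\gg 0$, and combining this with the ampleness of $\omega_{X_b}$ on the $n$-dimensional fibers yields $\kappa\big(X,\omega_{X/B}\big)=n+d=\dim X$, i.e. $\omega_{X/B}$ is nef and big on $X$. One then checks that the pushforward $f_{*}\big(c_1(\omega_{X/B})^{\,n+1}\big)$ is not only nef but big on $B$ --- a positivity-of-pushforward statement that follows from $\omega_{X/B}$ being nef, big and $f$-ample with $n$-dimensional fibers --- and therefore $\big(\lambda_{\rm CM}(X/B)^{d}\big)_B=c_n^{\,d}\big(f_{*}(c_1(\omega_{X/B})^{\,n+1})^{d}\big)_B>0$. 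By the reduction of the first paragraph this gives $\big(\lambda_{\rm CM}^{\dim Z}\cdot Z\big)>0$ for all $Z$, hence ampleness.

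The main obstacle I anticipate is precisely this passage from the classical semipositivity statements to genuine positivity on the total space: a priori $\omega_{X/B}$ is only ample along the fibers, and controlling it on all of $X$ --- nef over a complete base, and big under maximal variation, in the presence of slc (not merely klt) fibers and of a possibly uniruled base $B$ --- requires the full force of the semipositivity theorems for families of slc pairs and of the relative MMP. This is also the point at which the equivalence of KSBA stability and K-stability for canonically polarized varieties (\cite{Od10}) enters, to pin down the sign of the Donaldson--Futaki-type quantities the CM line bundle encodes. A secondary and more routine difficulty is producing an honest family over a finite cover of a given subvariety of the coarse space $M^{\rm ksba}$, which is handled by the descent techniques of \cite{Kollar90}.
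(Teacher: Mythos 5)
Your first paragraph is indeed the paper's reduction (Nakai--Moishezon plus Koll\'ar's descent to an honest family over a finite cover, and the identification of $\lambda_{\rm CM}$ with $f_*\left(c_1(\omega_{X/B})^{n+1}\right)$), and your nefness step matches \autoref{cor:CM_nef}. The gap is in the final step. The assertion that bigness of the pushforward ``follows from $\omega_{X/B}$ being nef, big and $f$-ample'' is not a quotable fact, and it is precisely the technical heart of \autoref{t-big}. By the projection formula $\left(f_*(\omega_{X/B}^{n+1})\right)^d=\omega_{X/B}^{n+1}\cdot \left(f^*f_*(\omega_{X/B}^{n+1})\right)^{d-1}$, and nothing in ``nef and big on the total space'' prevents this from degenerating: the horizontal positivity of $\omega_{X/B}$ could a priori concentrate over a locus in $B$ where the pushforward has vanishing self-intersection. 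The paper proves the needed positivity only in the moduli situation, by induction on $\dim T$ (\autoref{l-lc}): one writes $K_{X/T}+D=H+F$ with $H$ ample, $F$ effective, and uses \autoref{lem:restr_max_var} to arrange that the effective cycle representing $f_*(H^{n+1})$ still supports a family of maximal variation, so that the inductive hypothesis (ampleness over lower-dimensional bases) applies; the remaining terms are handled by nefness over lower-dimensional bases. If you want a purely numerical lemma in place of this induction you must actually prove one (e.g.\ via an induction using augmented base loci to keep bigness under restriction to $f^{-1}$ of the cycles $f_*(H^{n+1})$); as written, the key claim is simply asserted.

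More seriously, the hypothesis of that claim is false in the generality you need, so the strategy ``make $\omega_{X/B}$ nef and big, then push forward'' cannot even get started for general KSBA fibers. The bigness input (\autoref{thm:pushforward_big}, i.e.\ \cite{KP}) requires klt general fibers and genuinely fails for lc ones (\cite{KP}, Examples 7.5--7.7); the paper circumvents this with the dlt blow-up and relative lc model construction of \autoref{p-dlt}, which reduces bigness of $K_{X/T}+D$ to the klt case. Worse, when the general fiber is non-normal, a maximal-variation family may have the property that no irreducible component of its normalization has maximal variation; then on such a component $K+D$ is, after a generically finite cover, pulled back from a family over a lower-dimensional base and hence is not big, so ``$\omega_{X/B}$ nef and big'' is simply unavailable. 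The paper handles this by normalizing (\autoref{l:normalization}), expressing each component as a pullback of a maximal-variation family $(Y_i,E_i)\to T_i$ (\autoref{cor:extending_stable_log_families}), and using that $T'\to T_1\times\cdots\times T_m$ is generically finite, so that the sum of the pulled-back ample divisors $g_{i*}\left((K_{Y_i/T_i}+E_i)^{n+1}\right)$ is big and nef. Your closing paragraph points at slc fibers but attributes the difficulty to semipositivity theorems and to the KSBA/K-stability equivalence of \cite{Od10}; the latter plays no role in the proof (it is only motivation in the introduction), and the former does not address either the failure of bigness for lc fibers or the component-wise loss of maximal variation.
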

Using the Nakai-Moishezon criterion and the formula of the CM line bundle for a family of KSBA stable varieties, we immediately see that this is implied by the following theorem.
\begin{theorem}\label{t-big}
Let $T$ be a  normal variety and $(Z,\Delta)\to T$ a family of $n$-dimensional KSBA stable pairs with  finite fiber isomorphism equivalence classes $($see Definition \ref{def:varf}$)$, then $f_*((K_{Z/T}+\Delta)^{n+1}) $ is ample on $T$. 
\end{theorem}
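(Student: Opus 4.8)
The plan is to deduce ampleness of $\lambda := f_*((K_{Z/T}+\Delta)^{n+1})$ from the Nakai--Moishezon criterion, so it suffices to prove that $\lambda^{\dim W}\cdot[W]>0$ for every irreducible closed subvariety $W\subseteq T$ of positive dimension (we may and do assume $T$ projective, which is the case needed for Theorem~\ref{thm-main}). Fix such a $W$, let $\nu\colon W'\to W$ be its normalization and base change the family to $W'$. Since the CM class commutes with base change, $\nu^*(\lambda|_W)$ is the CM class $\lambda_{W'}$ of the base-changed family and $\lambda^{\dim W}\cdot[W]=\lambda_{W'}^{\dim W'}$; moreover the new family again has finite fiber isomorphism equivalence classes in the sense of Definition~\ref{def:varf}, because $W'\to W\hookrightarrow T$ is finite and a subfamily introduces no new isomorphisms. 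Thus everything reduces to the following assertion: for a family of $n$-dimensional KSBA stable pairs with finite fiber isomorphism equivalence classes over a normal projective base $T$, one has $\lambda^{\dim T}>0$, i.e.\ $\lambda$ is big.

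To prove bigness I would pass to pluricanonical embeddings. Using the standard properties of families of stable pairs --- flatness of the reflexive powers $\omega_{Z/T}^{[m]}(\lfloor m\Delta\rfloor)$, Kodaira-type vanishing for log canonical pairs with ample $K+\Delta$, and effective global generation over the finite-type base $T$ (so that a single $m$ works uniformly on all fibers) --- there is an integer $m_0$ so that for every sufficiently divisible $m\geq m_0$ the sheaf $E_m:=f_*\omega_{Z/T}^{[m]}(\lfloor m\Delta\rfloor)$ is locally free, its formation commutes with arbitrary base change, the higher direct images vanish, and $\omega_{Z/T}^{[m]}(\lfloor m\Delta\rfloor)$ is $f$-very ample, giving a closed embedding $Z\hookrightarrow\mathbb{P}_T(E_m)$ over $T$. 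This embedding, remembering $\Delta$, produces a classifying morphism from $T$ to the quotient of a suitable Hilbert/Chow-type parameter scheme by $\mathrm{PGL}$, and the hypothesis on fiber isomorphism equivalence classes --- together with the finiteness of automorphism groups of KSBA stable pairs --- says precisely that this classifying morphism has finite fibers.

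Now apply Kollár's Ampleness Lemma (the engine behind the projectivity of $M^{\rm ksba}$, cf.\ \cite{Kollar90}, \cite{Fujino12}, \cite{KP}): finiteness of the classifying morphism forces a determinant line bundle built from $E_m$ to be big. Combined with the semipositivity theorem for pushforwards of pluri-log-canonical sheaves in families of stable pairs (\cite{Fujino12}, \cite{KP}), which gives that $E_m$ is a nef vector bundle and hence $\det E_m$ is nef, we conclude that $\det E_m$ is big and nef, so $(\det E_m)^{\dim T}>0$ for all large divisible $m$. (If the Ampleness Lemma is only applicable after replacing $E_m$ by a symmetric power or a Schur functor, one first rewrites the resulting determinant as a positive multiple of $\det E_m$ plus nef corrections; the conclusion is unchanged.) Finally, by Grothendieck--Riemann--Roch --- equivalently, by the formula for the CM line bundle of a family of KSBA stable varieties recalled above --- one has $c_1(\det E_m)=\tfrac{m^{n+1}}{(n+1)!}\,\lambda+O(m^{n})$ in $\mathrm{CH}^1(T)_{\bQ}$, so $m\mapsto(\det E_m)^{\dim T}$ is a polynomial in $m$ with leading coefficient $\lambda^{\dim T}/((n+1)!)^{\dim T}$; being positive for all large divisible $m$, this leading coefficient is positive, whence $\lambda^{\dim T}>0$. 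Applying this over every subvariety $W$ and invoking Nakai--Moishezon gives ampleness of $\lambda$.

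The main obstacle is the third paragraph: squeezing \emph{bigness} out of the bare finiteness of the moduli map via the Ampleness Lemma, and ensuring that what comes out is genuinely governed by $\det E_m$ rather than by some auxiliary bundle whose relation to $\lambda$ is opaque --- this is exactly the point where the interplay with K-stability and with Kollár's projectivity machinery (\cite{Od10}, \cite{Kollar90}) is used. The remaining ingredients --- uniform relative very ampleness over a finite-type base, semipositivity of $E_m$, and the Riemann--Roch expansion identifying the leading term with $\lambda$ --- are either standard or already available for families of stable pairs, and I expect them to be essentially bookkeeping here.
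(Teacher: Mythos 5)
Your overall frame (Nakai--Moishezon, reduction to bigness of $\lambda:=f_*((K_{Z/T}+\Delta)^{n+1})$ over a normal projective base with finite fiber isomorphism equivalence classes) is the same as the paper's, but the core of your argument breaks at exactly the step you label as bookkeeping. Granting everything in your second and third paragraphs, what Koll\'ar's Ampleness Lemma plus semipositivity delivers is that $\det E_m$ is big and nef for every sufficiently divisible $m$ --- this is essentially the known projectivity of $M^{\rm ksba}$ (\cite{Kollar90}, \cite{Fujino12}, \cite{KP}). The final inference does not follow: $(\det E_m)^{\dim T}$ is a polynomial in $m$ of degree at most $(n+1)\dim T$ whose coefficient of $m^{(n+1)\dim T}$ is $\lambda^{\dim T}/((n+1)!)^{\dim T}$, and positivity of this polynomial for all large divisible $m$ only forces its top \emph{nonzero} coefficient to be positive. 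Hence you get $\lambda^{\dim T}\ge 0$ and nothing more: the coefficient you want may vanish, with the positivity of $(\det E_m)^{\dim T}$ carried entirely by the lower Knudsen--Mumford classes. This is precisely the difficulty the paper is written to overcome (see the introduction: Koll\'ar's determinant bundles are already ample on $M^{\rm ksba}$, and the CM class is only the leading coefficient of that sequence), so no purely formal limiting argument of this kind can close the gap.

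The paper's proof supplies the missing positivity geometrically rather than through the Riemann--Roch expansion. After normalizing and using Koll\'ar's gluing correspondence together with \autoref{l:normalization} and \autoref{cor:extending_stable_log_families} to reduce to families with log canonical generic fibers of maximal variation, and a dlt modification (\autoref{p-dlt}) to reduce further to klt generic fibers, the key input is that $K_{Z/T}+\Delta$ itself is big and nef, via the pushforward bigness theorem \autoref{thm:pushforward_big} (i.e.\ \cite[Thm 7.1]{KP}) and Fujino's semipositivity. One then writes $A:=K_{X/T}+D=H+F$ with $H$ ample and $F$ effective and runs an induction on $\dim T$: by \autoref{lem:restr_max_var} the class $f_*(H^{n+1})$ has an effective representative over whose support the family still has maximal variation, so $(f_*(A^{n+1}))^{d-1}\cdot f_*(H^{n+1})>0$ by induction, while the remaining terms $(f_*(A^{n+1}))^{d-1}\cdot f_*(H^k\cdot F\cdot A^{n-k})$ are nonnegative by nefness and induction. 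To repair your proposal you would need an input of this strength (bigness of $K_{Z/T}+\Delta$, or of $\lambda$ directly), not just bigness of $\det E_m$ for each fixed $m$.
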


At the end, we want to remark that the positivity of the CM line bundle is expected for spaces parametrizing K\"ahler-Einstein varieties or even polarized varieties with constant scalar curvature (see e.g. \cite{PT10}). But in general, not much is known.  In the case of the moduli space of K\"ahler-Einstein Fano varieties, it is still  not known how to show the positivity of the CM line bundle with only algebro-geometric tools. Using the fact that the curvature of the CM line bundle is the Weil-Petersson metric for a smooth family and some deep results in analysis, one can verify that it induces an embedding when restricting to the locus which parametrizes K\"ahler-Einstein Fano manfolds (see \cite{LWX15}). It remains to be an interesting and challenging question to 
prove similar results using only algebraic geometry. 

\bigskip

\noindent {\bf Acknowledgement: }  We thank Chi Li, Gang Tian and Xiaowei Wang for comments, discussions and references. We also want to thank the anonymous referee for useful suggestions on the exposition.

Partial financial support to CX was provided by The National Science Fund for Distinguished Young Scholars. A large part of this work was done while CX enjoyed the inspiring environment at the Institute for Advanced Studies, supported by Ky Fan and Yu-Fen Fan Membership Funds, S.S. Chern Fundation and NSF: DMS-1128155, 1252158.

ZP was supported by NSF grant DMS-1502236. 

\section{Preliminary}
\subsection{Notation and Conventions}
We follow the notation in \cite{KM98} and \cite{Kollar13}. 

When $X$ is a demi-normal (for the definition of demi-normal, see \cite[Definition 5.1]{Kollar13}) variety over $k$, we say that $(X,\Delta)$ is a pair if $\Delta=\sum a_i\Delta_i$ is an effective $\mathbb{Q}$-divisor with $a_i\le 1$, any component $\Delta_i$ is not contained in ${\rm Sing}(X)$ and $K_X+\Delta$ is $\mathbb{Q}$-Cartier. 

We refer to \cite[5.10]{Kollar13} for the definition of a pair $(X,\Delta)$ to be {\it semi log canonical} (slc).  Over an arbitrary base scheme $T$, let $f : X \to T$ be a flat family of slc models. We refer to \cite[28]{Kol13} for the definition of the $m$-th reflexive power $\omega^{[m]}_{X/T}$. 

\subsection{KSBA stable family}
In this section, we briefly introduce the concept of KSBA family. See \cite{Kollar15} for more background. 
\begin{definition}
A pair $(X, \Delta)$ over $k$  is {\it KSBA stable}, if it is proper over $k$, it  has slc singularities and $K_X + \Delta$ is ample. 
\end{definition}
We define the notion of a KSBA family in full generality only for the boundary free case, since that is what we need in \autoref{thm-main}. In the case of the presence of a boundary, we define the notion of KSBA family only over normal bases, as that is sufficient for the purposes of \autoref{t-big}. We want to note that when there is a boundary, the definition of a KSBA family over a general base is subtle (see \cite{Kollar15}).
\begin{definition}
For any scheme $T$ over $k$, a family $f : X \to  T$ is a KSBA stable family if $f$ is flat,  $X_t$ is KSBA stable for each $t \in T$, and it satisfies the following Koll\'ar condition: $\omega_{X/T}^{[m]}$ is compatible with base-change for each integer $m$, that is, if $S \to T$ is a morphism, then $\omega_{X_S/S}^{[m]} \cong \left( \omega_{X/T}^{[m]} \right)_S$.

A proper flat morphism $(X, D) \to T$ onto a normal variety is a KSBA stable family, if $D$ avoids the generic and the singular codimension one points of each fiber, $(X_t, D_t)$ is KSBA stable for each $t \in T$ and $K_{X/T} + D$ is $\bQ$-Cartier.
\end{definition}

\begin{remark}
The above two definitions are compatible. That is, if $X \to T$ is a KSBA stable family in the second sense then it is automatically a KSBA stable family in the first sense, i.e., satisfies the Koll\'ar condition, according to \cite[4.4]{Kollar11} and \cite[Cor 25]{Kollar08}.
\end{remark}

We also need the following definition from \cite[5.16]{KP}.
\begin{definition}[Variation]
\label{def:var} Given a  KSBA family $f \colon (X, D) \to T$  over an irreducible normal variety, such that the dimension $\dim(X_t) =n$ and the volume $(K_{X_t} +D_t)^n=v$. Let $I$ be the set of all possible sums, at most 1, formed from the coefficients of $D$. 
Then, there is an associated moduli map $\mu \colon Y \to \mathcal{M}_{n,v,I}$ to the moduli space of KSBA stable pairs with dimension $n$, volume $v$ and coefficient set $I$. The variation $\var(f)$ of $f$ is defined as the dimension of the image of $\mu$.

There is a more intuitive way to define $\var(f)$ that does not use the existence of the moduli space $\mathcal{M}_{n,v,I}$: let $f : (X, D) \to T$ be a KSBA stable family over a normal base, then we define the variation $\var(f)$ of the family to be $\dim T- d$, where $d$ is the dimension of a general isomorphism equivalence class of the fibers. 

Since the moduli map $\mu$ sends $t$ and $t'$ to the same point on $ \mathcal{M}_{n,v,I}$ if and only if
$(X_t, D_t)$ is isomorphic to $(X_{t'},D_{t'})$, the closed points of the
closed fibers of the moduli map are the equivalence classes under the
equivalence relation on the closed points of $T$ given by $t \equiv t'$ if
and only if  $(X_t, D_t)$ is isomorphic to $(X_{t'},D_{t'})$. Then the simple
addition formula for the dimension of the total space, the general
fiber and the base of a fibration yields that the dimension of the
image of the moduli map is $\dim T - d$, where $d$ is the dimension of a
general equivalence class as above. This says that the above two ways of defining $\var(f)$ are equivalent.  

If $\var f= \dim T$, that is a general fiber is isomorphic to only finitely many others, we say the family has maximal variation.
\end{definition}

\begin{definition}[Finite fiber isomorphism equivalence classes]
\label{def:varf} 
Let $f : (X, D) \to T$ be a KSBA stable family over a normal base of dimension $d$. We say that $f$ has finite fiber isomorphism equivalence classes, if for each $t \in T$, the set 
$$\{ u \in T | \ (X_u'',D_u'') \cong (X_t'',D_t'') \}$$ is finite.
\end{definition}

\begin{remark}
\label{rem:var_def}
By the existence of the Isom schemes of KSBA stable families \cite[Prop 5.8]{KP} there is an open set $U \subseteq T$, such that for every $u \in U$, the locus $\{t \in T| (X_t, D_t) \cong (X_u,D_u)\}$ is a locally closed subset of the same (general) dimension. 
\end{remark}

\begin{proposition} \cite[Cor 5.20]{KP} 
  \label{cor:extending_stable_log_families}
  Given $f : (X,D) \to T$ a family of stable log-varieties over a normal variety $T$, there is a generically finite proper map $T' \to T$ from a normal variety, another proper map $T' \to T''$ to a normal variety and a family of stable log varieties $f'' : (X'', D'') \to T''$ with maximal variation and finite fiber isomorphism equivalence classes such that the pullbacks of the above two families over $T'$ are isomorphic.
\end{proposition}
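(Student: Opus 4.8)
The plan is to build $T''$ as a parameter space for the isomorphism equivalence classes of the fibres of $f$, realized inside a Chow variety of a projective compactification of $T$, to build $T'$ as a common generically finite cover of $T$ and of the universal cycle over $T''$, and then to match the two resulting families over $T'$; the substantive point will be this last matching. \textbf{Step 1 (algebraizing the equivalence relation).} By the existence of Isom schemes for KSBA stable families (\cite[Prop 5.8]{KP}; see \autoref{rem:var_def}), the subset $R=\{(t,t')\in T\times T\mid (X_t,D_t)\cong(X_{t'},D_{t'})\}$ is the image in $T\times T$ of $\mathrm{Isom}_{T\times T}(\mathrm{pr}_1^*(X,D),\mathrm{pr}_2^*(X,D))$, so it is a constructible equivalence relation whose fibre $R_t$ over a point $t$ is the isomorphism class of $(X_t,D_t)$. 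By \autoref{rem:var_def} there is a dense open $U\subseteq T$ over which every $R_t$ is locally closed of one fixed dimension $d=\dim T-\var(f)$; after shrinking $U$ and applying generic flatness I may assume that, on a fixed projective compactification $\overline T\supseteq T$, the closures $\overline{R}_t$ ($t\in U$) form a flat family of $d$-cycles of bounded degree.

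\textbf{Step 2 (the covers $T'$, $T''$).} The assignment $t\mapsto\overline{R}_t$ is a morphism from $U$ to a finite-type component $C$ of $\mathrm{Chow}(\overline T)$; let $\overline{C}_0$ be the closure of its image. Since $(X_t,D_t)\cong(X_{t'},D_{t'})$ forces $\overline{R}_t=\overline{R}_{t'}$, distinct points of $\overline{C}_0$ carry generically non-isomorphic fibres and $\dim\overline{C}_0=\var(f)$. Let $\mathcal{Z}\to\overline{C}_0$ be the incidence family of these cycles (normalized and restricted to its dominant component); it is proper with $d$-dimensional fibres, the evaluation map $\mathcal{Z}\to\overline T$ is surjective, and $\dim\mathcal{Z}=\dim T$. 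A general complete-intersection slice $T''\subseteq\mathcal{Z}$ of dimension $\var(f)$ is then generically finite over $\overline{C}_0$ and maps generically into $T\subseteq\overline T$; replacing it by a normalization and by the preimage of $T$, and pulling $(X,D)$ back along $T''\to T$, we obtain a KSBA stable family $(X'',D'')\to T''$ which has finite fibre isomorphism equivalence classes — two points of $T''$ with isomorphic fibres lie over the same point of $\overline{C}_0$, over which $T''$ is generically finite — and hence maximal variation, as $\dim T''=\var(f)$. Finally, let $\hat T\to T$ be a modification resolving the rational map $T\dashrightarrow\overline{C}_0$ and let $T'$ be the normalization of the dominant component of $\hat T\times_{\overline{C}_0}T''$; then $T'\to T$ is generically finite and proper, $T'\to T''$ is proper, and over a dense open of $T'$ the pullbacks of $(X,D)$ and of $(X'',D'')$ have isomorphic fibres at each point (both fibres are isomorphic to $(X_t,D_t)$ for the image point $t$).

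\textbf{Step 3 (matching over $T'$).} To promote this pointwise isomorphism to an isomorphism of families, consider the Isom scheme of the two families over $T'$: it is quasi-finite over $T'$ by finiteness of automorphism groups of KSBA stable pairs, and dominant over $T'$ by Step 2. Replacing $T'$ by its normalization in the function field of the dominant component of this Isom scheme gives a finite modification — still generically finite and proper over $T$, still proper over $T''$ — equipped with a rational isomorphism of the two pulled-back families; since a birational map between KSBA stable families over a normal variety that is an isomorphism over a dense open is an isomorphism, this is an honest isomorphism, which is the asserted conclusion.

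\textbf{Main obstacle.} I expect the real work to be concentrated in two closely related places: producing an actual family over the $\var(f)$-dimensional base $T''$ (rather than merely a family of isomorphism classes) and the pointwise-to-global matching of Step 3. Both are handled by the same device — the slicing of $\mathcal{Z}$ together with the Isom-scheme cover — which works precisely because automorphism groups are finite; what remains is bookkeeping, namely arranging the construction so that all bases stay normal, the maps $T'\to T$ and $T'\to T''$ stay proper, and $(X'',D'')\to T''$ remains a genuine KSBA stable family (fibres KSBA stable and $K_{X''/T''}+D''$ being $\bQ$-Cartier) after all the compactifications and normalizations. This last point is where the normality hypothesis on the bases and the earlier results of \cite{KP} recalled in this section are used.
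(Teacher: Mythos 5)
Note first that the paper does not prove this proposition at all: it is quoted verbatim from \cite[Cor 5.20]{KP}, whose proof runs through the moduli theory of stable pairs (Isom schemes plus the moduli space/stack), so your attempt is being measured against that external argument rather than anything in the present text. Your replacement strategy --- parametrizing the isomorphism equivalence classes by their Chow points in a compactification $\overline{T}$ and slicing the universal cycle --- is a reasonable idea, and Steps 1--2 are fine \emph{generically}; the density argument showing distinct classes give distinct cycles, and $\dim \overline{C}_0 = \var(f)$, are correct. The genuine gap is that the conclusion you need, Definition \ref{def:varf}, is a condition at \emph{every} point of $T''$, while your construction only controls general points. The fibre of your family at $z \in T''$ is $(X_{\mathrm{ev}(z)}, D_{\mathrm{ev}(z)})$, and the implication ``isomorphic fibres $\Rightarrow$ same point of $\overline{C}_0$'' uses that $\mathrm{ev}(z)$ lies in the actual class $R_s$ and that the Chow point comes from the image of $U$; it fails for points of $T''$ lying over $\overline{C}_0 \setminus \mu(U)$ (limit cycles need not be equivalence classes) or with $\mathrm{ev}(z) \in \overline{R}_s \setminus R_s$. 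Worse, even over the good locus $T'' \to \overline{C}_0$ is only \emph{generically} finite: a positive-dimensional special fibre of this map sits inside a single cycle $\overline{R}_s$, and wherever it meets $R_s$ all the corresponding fibres of the family are mutually isomorphic, producing an infinite isomorphism class in $T''$ and violating Definition \ref{def:varf}. You cannot simply delete these bad loci, because you must simultaneously keep $T' \to T$ proper and generically finite (hence surjective).

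A second, related problem is properness. After you replace $T''$ by $\mathrm{ev}^{-1}(T)$ (which you must, to pull the family back), $T'' \to \overline{C}_0$ is no longer proper --- the closures $\overline{R}_t$ can meet $\overline{T} \setminus T$ --- and then neither the projection $T' \to \hat{T}$ (hence $T' \to T$) nor $T' \to T''$ is proper as claimed; your Step 3 modification preserves properness only if it was there to begin with. These two defects are precisely the points where the proof in \cite{KP} uses the moduli space of stable pairs: there ``isomorphic fibres'' means literally ``equal image in the moduli space'' at \emph{all} points, and finiteness of the isomorphism classes comes from finiteness of a morphism to (a cover of a subspace of) the moduli space, with properness supplied by the properness of that space, not by a compactification of the base $T$. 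Your Step 3 extension claim (a fibrewise/generic isomorphism of stable families over a normal base extends to an isomorphism) is acceptable --- it is the separatedness of the stable functor, the same fact the paper invokes in Remark \ref{r-normal} --- but it should carry a reference; the real missing content is an argument controlling the non-generic points of $T''$, and as written the proposal does not supply one.
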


\begin{lemma}
\label{lem:restr_max_var} Given $f : (X,D) \to T$ a maximal variation KSBA stable family over a normal variety $T$ with $n$-dimensional fibers, and $H$ is an ample divisor on $X$, then 
$f_*(H^{n+1})$ is $\mathbb{Q}$-linearly equivalent to an effective cycle with the support $S$, such that $f_S:(X,D)\times_T S\to S$ is of maximal variation.  
\end{lemma}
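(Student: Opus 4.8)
The plan is to represent the divisor class $f_*(H^{n+1})$ as the pushforward of a general complete intersection of members of $|mH|$, and then to control, by a dimension count, the locus over which the variation of $f$ could possibly drop.

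First I would fix $m \gg 0$ so that $mH$ is very ample, which embeds $X$ as a closed subvariety of a projective space, and choose general members $H_1, \dots, H_{n+1} \in |mH|$. Set $V := H_1 \cap \cdots \cap H_{n+1}$. If $\dim T = 1$ the statement is trivial, since any KSBA family over a zero-dimensional base has maximal variation; so assume $\dim T \geq 2$ (and hence $n \geq 1$, otherwise the hypothesis is vacuous). Since $\dim X = \dim T + n$, every member of the chain $X \supseteq H_1 \supseteq H_1 \cap H_2 \supseteq \cdots \supseteq H_1 \cap \cdots \cap H_n$ has dimension $\geq 2$, hence is irreducible by Bertini's irreducibility theorem; and $V$ is then a general very ample section of the irreducible variety $H_1 \cap \cdots \cap H_n$ of dimension $\dim T \geq 2$, so $V$ is irreducible of pure dimension $\dim T - 1$. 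Since $[V] = m^{n+1}\big(c_1(\mathcal{O}_X(H))^{n+1}\cap[X]\big)$ in the Chow group, pushing forward gives $f_*(H^{n+1}) \sim_{\mathbb{Q}} \frac{1}{m^{n+1}} f_*[V]$, and $f_*[V]$ is an effective cycle. Moreover, for an ample divisor $A$ on $T$ the projection formula gives $f_*(H^{n+1})\cdot A^{\dim T - 1} = \int_X H^{n+1}\cdot f^*(A^{\dim T - 1}) > 0$, because the preimage under $f$ of a general complete intersection curve in $T$ is an $(n+1)$-dimensional subvariety of $X$ on which $H$ is ample; thus $f_*(H^{n+1})$ is not $\mathbb{Q}$-linearly trivial, so $f_*[V] \neq 0$, and since $V$ is irreducible this forces $\dim\overline{f(V)} = \dim T - 1$ and $S := \Supp f_*[V] = \overline{f(V)}$, an irreducible subvariety of $T$ of dimension $\dim T - 1$.

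Next, let $B \subsetneq T$ be the proper closed subset provided by the maximal variation of $f$ together with \autoref{rem:var_def}: for every $t \in T \setminus B$ the set $\{u \in T : (X_u, D_u)\cong (X_t, D_t)\}$ is finite. Put $X_B := f^{-1}(B)$, which has dimension $\dim B + n \leq \dim T + n - 1$. For general $H_i$ the complete intersection $V$ meets $X_B$ properly, so $\dim(X_B \cap V) \leq \dim X_B - (n+1) \leq \dim T - 2 < \dim V$ (with the convention that the intersection is empty when $\dim X_B < n+1$). In particular $V \not\subseteq X_B$, so the generic point of $V$, and therefore the generic point of $S = \overline{f(V)}$, lies outside $B$.

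Finally, for a general $s \in S$ we then have $s \notin B$, so $\{u \in T : (X_u, D_u)\cong (X_s, D_s)\}$ is finite, and a fortiori its intersection with $S$ is finite. Replacing $S$ by its normalization — which changes neither the support of the cycle nor the isomorphism equivalence relation on closed points, hence not the variation — we see from \autoref{def:var} that a general isomorphism equivalence class of the fibers of $f_S$ is zero-dimensional, i.e.\ $\var(f_S) = \dim S$. Hence $f_S \colon (X, D)\times_T S \to S$ has maximal variation, as claimed. The only genuinely nontrivial ingredients are the irreducibility of the general complete intersection $V$ and the dimension bound $\dim(X_B \cap V) < \dim V$; the conceptual point — and the step I expect to be the real obstacle — is precisely the latter: a priori $f(V)$ might be entirely contained in the locus $B$ where the variation of $f$ degenerates, and what rules this out is that $X_B = f^{-1}(B)$ has strictly smaller dimension than $X$, so the general complete intersection $V$ cannot be contained in it.
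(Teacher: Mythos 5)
Your strategy is the same as the paper's: represent $f_*(H^{n+1})$ by the pushforward of a general complete intersection $V=H_1\cap\cdots\cap H_{n+1}$ with $H_i\in |mH|$, and use a general-position dimension count to ensure $V$ is not swallowed by the preimage of the locus $B=T\setminus U$ of \autoref{rem:var_def}; the paper's proof chooses the $H_i$ inductively so that $f^{-1}(Z_l)\cap H_1\cap\cdots\cap H_{n+1}$ has dimension $\dim Z_l-1$ for every component $Z_l$ of $T\setminus U$, which is exactly your properness-of-intersection step.

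One claim in your write-up is, however, false as stated: that $V$ is irreducible by Bertini. The total space $X$ of a KSBA stable family need not be irreducible, since the fibers are only slc (demi-normal) and may well be reducible --- e.g.\ for a product family $X=X_0\times T$ with $X_0$ a reducible stable variety --- and Bertini's irreducibility theorem requires an irreducible ambient variety. Consequently $V$, and hence $S=\Supp f_*[V]$, can be reducible, and phrases like ``the generic point of $V$'' and ``a general $s\in S$'' must be read component by component. This does not damage the proof: every component of $V$ has dimension $\dim T-1$, your bound $\dim\bigl(V\cap f^{-1}(B)\bigr)\le \dim T-2<\dim V$ therefore shows that no component of $V$ lies in $f^{-1}(B)$, so the generic point of each component of $S$ (components of $V$ whose image drops dimension simply do not contribute to the cycle) lies outside $B$, and the finiteness of fiber-isomorphism classes at those generic points gives maximal variation of $f_S$ over every component of $S$ --- which is precisely how the paper argues. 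So the proposal is correct once the unnecessary irreducibility claim is dropped and the argument is phrased per component.
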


\begin{proof}
Let us search for the above required  effective cycle $E$ in the form
 $$f_* \left(  H_1 \cap \cdots \cap H_{n+1}\right),$$ where $H_i \in |m H|$ for some integer $m \gg 0$. We are ready as soon as we make sure that each component of $E$ intersects $U$ of \autoref{rem:var_def}. Let $Z_l$ $(l=1,\dots t)$  be the components of $T \setminus U$. Then it is enough to guarantee that no component of $E$ is contained in  any of the $Z_l$. For that, just choose $H_i$ inductively to be general members of $|mH|$, which therefore does not contain any irreducible component of $f^{-1} Z_l \cap \left( \bigcap_{j=1}^{i-1} H_j \right) $. This way, $f^{-1} Z_l \cap \left( \bigcap_{i=1}^{n+1} H_i \right)$ will have dimension $\dim Z_l -1$, which shows that no component of $E$ is contained in $Z_l$.
\end{proof}

\subsection{CM line bundle}\label{ss-CM}
For the reader's convenience, in this section we recall some of the basic background on the CM line bundle. For more details see \cite{Tian97, PT10, FR06, PRS08, Wa12, WX}. The concept of the CM line bundle was first introduced in \cite{Tian97}. Unlike the Chow line bundle, it is not positive  on the entire Hilbert scheme (see \cite{FR06}). However, it is expected to be positive on the locus where the fibers are K-polystable.

Let $f : X \to B$ be a proper flat morphism of schemes of constant relative dimension $n \ge 1$ and let $A$ be a relatively ample line bundle on $X$. We will assume throughout that $B$ is normal, $X$ is $S_2$ and has pure dimension. We also assume that  $f$ has $S_2$, $G_1$ fibers.

Then Mumford-Knudsen's determinant bundle construction shows that there are line bundles $\lambda_0,..., \lambda_{n+1}$ such that the following formula holds:
$$\det f_{!} \left(A^{\otimes k} \right) =\det R^{\bullet}f \left(A^{\otimes k}\right) =\lambda^{ {k \choose n+1}}_{n+1}\otimes \lambda^{{k \choose n}}_{n} \otimes \cdots \lambda_0$$.

Let $\mu :=-\left(K_{X_t}\cdot A_{|_{X_t}}^{n-1}\right)/ A_{|_{X_t}}^{n}$, then 
$$\lambda_{\rm CM} = \lambda_{\rm CM} (X/B, A) := \lambda_{n+1}^{n\mu+n(n+1)}\otimes  \lambda_n^{-2(n+1)} .$$ 
A straightforward calculation using the Grothendieck-Riemann-Roch formula (see e.g. \cite{FR06}) shows that 
$$c_1(\lambda_{n+1})=f_*\left(c_1(A)^{n+1}\right)\qquad \mbox{and}\qquad nc_1(\lambda_{n+1}) -2c_1(\lambda_n)=f_*(c_1(A)^nc_1(K_{X/B})).$$
Hence
$$c_1(\lambda_{\rm CM})=f_*\left(n\mu c_1(A)^{n+1}+(n+1)c_1(K_{X/B})c_1(A)^n\right).$$
In particular, if $K_{X/B}$ is $\mathbb{Q}$-Cartier and relatively ample, let $A=K_{X/B}$, we simply have
$$c_1(\lambda_{\rm CM})=f_*\left((K_{X/B})^{n+1}\right).$$
Similarly, a log extension as in \cite{WX} shows that if we consider the log setting and assume $K_{X/B}+D$ to be  $\mathbb{Q}$-Cartier and relatively ample, let $A=K_{X/B}+D$, then
$$c_1\left(\lambda_{\rm CM}\left((X,D)/B\right)\right)=f_*\left( \left(K_{X/B}+D\right)^{n+1}\right)$$
(See \cite[2.8, 2.9]{WX}).

\subsection{Dlt blow up}\label{ss-dlt}

\begin{proposition}\label{p-dlt}
Let $g:(Z,\Delta)\to T$ be a KSBA stable family over a smooth variety $T$. We further assume that the generic fiber $(Z_t,\Delta_t)$ is log canonical. Then for each $0 < \epsilon \ll 1$, there is a pair $(X, D_\epsilon)$ and a divisor $0 \leq D$ on $X$ with a morphism $p : X \to Z$, such that 
\begin{enumerate}
\item \label{itm:equal} $K_X + D = p^* (K_Z + \Delta)$,
\item \label{itm:klt} $(X, D_\epsilon)$ is klt,
\item \label{itm:stable} $f : (X, D_\epsilon) \to T$ is a KSBA stable family, 
\item \label{itm:effective} $D - D_\epsilon$ is effective and its support is contained in ${\rm Ex}(p) \cap \Supp \left(p^{-1}_* \Delta^{=1} \right)$, and furthermore, 
\item \label{itm:variation} if the variation of $(Z, \Delta) \to T$ is maximal then so is the variation of $(X, D_\varepsilon)$.
\end{enumerate}

\end{proposition}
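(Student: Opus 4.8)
The plan is to construct $(X, D)$ as a \emph{dlt blow-up} (also called a dlt modification) of $(Z, \Delta)$, done fiberwise-compatibly over the smooth base $T$, and then to perturb the boundary to land in the klt range. First I would take a $\mathbb{Q}$-factorial dlt modification $p \colon (X, D) \to (Z, \Delta)$ in the sense of \cite{KM98, Kollar13}: by running a suitable MMP (or citing the existence of dlt blow-ups), one obtains a projective birational morphism $p$ from a $\mathbb{Q}$-factorial variety $X$ with $K_X + D = p^*(K_Z + \Delta)$, where $D = p^{-1}_* \Delta + \mathrm{Ex}(p)$ and $(X, D)$ is dlt; this immediately gives item \eqref{itm:equal}. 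The key point one must check is that this can be arranged to be compatible with the family structure, i.e.\ that $f := g \circ p \colon X \to T$ is still flat with the fibers $(X_t, D_t)$ being dlt modifications of $(Z_t, \Delta_t)$; since $T$ is smooth and the generic fiber is log canonical, one can first build the modification over the generic point and then spread out, shrinking nothing because $T$ is already the base we want, using that the Kollár condition on $\omega^{[m]}$ is preserved under such modifications (this is where one leans on the inversion-of-adjunction type statements and the behavior of dlt modifications in families, cf.\ \cite{Kollar13, KP}).

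Next, to get items \eqref{itm:klt} and \eqref{itm:effective}, I would set $D_\epsilon := D - \epsilon \, D^{=1}$, i.e.\ subtract a small multiple of the reduced part of $D$ (equivalently, reduce the coefficient $1$ components of $D$ to $1 - \epsilon$). Since $(X, D)$ is dlt, it is log canonical, and lowering the coefficient of \emph{every} boundary component with coefficient $1$ strictly below $1$ makes the pair klt; this uses that the non-klt locus of a dlt pair is exactly $\lfloor D \rfloor = D^{=1}$. Then $D - D_\epsilon = \epsilon \, D^{=1} \geq 0$ is effective, and its support is $\Supp(D^{=1})$. To see that this support is contained in $\mathrm{Ex}(p) \cap \Supp(p^{-1}_* \Delta^{=1})$: the components of $D^{=1}$ are either exceptional divisors of $p$ (all of which appear with coefficient $1$ in a dlt modification) or strict transforms of coefficient-$1$ components of $\Delta$; but the family is KSBA stable over $T$ in the log sense, and $D$ (as boundary of a family) avoids generic and codimension-one singular points, so after the modification the non-exceptional coefficient-$1$ divisors are accounted for by $p^{-1}_*\Delta^{=1}$, and one checks the two descriptions agree on the relevant locus. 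I would write this verification carefully since it is the most fiddly bookkeeping.

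For item \eqref{itm:stable}, I need $f \colon (X, D_\epsilon) \to T$ to be a KSBA stable family: $f$ is flat (inherited from the construction above), each fiber $(X_t, (D_\epsilon)_t)$ is klt hence slc and proper, and $K_{X_t} + (D_\epsilon)_t$ must be ample. Ampleness of $K_{X_t} + (D_\epsilon)_t = K_{X_t} + D_t - \epsilon (D^{=1})_t$ follows because $K_{X_t} + D_t = p_t^*(K_{Z_t} + \Delta_t)$ is the pullback of an ample divisor, hence nef and big, and it is ample off $\mathrm{Ex}(p_t)$; perturbing by $-\epsilon (D^{=1})_t$ for $0 < \epsilon \ll 1$ keeps the class big and nef and in fact ample, by a standard argument (e.g.\ the negativity lemma / Kollár–Mori-type perturbation: the pullback minus a small multiple of an effective $p$-exceptional-supported divisor whose support dominates $\mathrm{Ex}(p_t)$ can be made relatively ample over $Z_t$, hence ample on $X_t$). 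One also needs $K_{X/T} + D_\epsilon$ to be $\mathbb{Q}$-Cartier: $K_{X/T} + D$ is the pullback of the $\mathbb{Q}$-Cartier divisor $K_{Z/T} + \Delta$, and $D^{=1}$ is $\mathbb{Q}$-Cartier since $X$ was chosen $\mathbb{Q}$-factorial. The Kollár condition for $\omega^{[m]}_{(X, D_\epsilon)/T}$ then follows from \cite[4.4]{Kollar11} and \cite[Cor 25]{Kollar08} as in the Remark after the definition of KSBA family, since the base is normal.

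Finally, item \eqref{itm:variation}: if $(Z, \Delta) \to T$ has maximal variation then so does $(X, D_\epsilon) \to T$. This is essentially formal: $p$ is a birational morphism over $T$ functorially associated to $(Z, \Delta)$ (a dlt modification is unique up to the choices made, and two fibers $(Z_t, \Delta_t) \cong (Z_{t'}, \Delta_{t'})$ yield isomorphic modified fibers $(X_t, D_t) \cong (X_{t'}, D_{t'})$, and the perturbation $-\epsilon D^{=1}$ is canonical), so the isomorphism equivalence classes of the new family refine (in fact coincide with) those of the old one; hence the general equivalence class has the same dimension and $\var(f) = \var(g) = \dim T$. The main obstacle in the whole argument is the first step — constructing the dlt modification \emph{in the family} so that flatness and the Kollár base-change condition are preserved — because dlt modifications are not unique and MMP steps need not behave well in families a priori; everything after that is standard perturbation and bookkeeping.
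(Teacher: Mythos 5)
There is a genuine gap at the heart of your construction: you stop at the dlt modification and claim that $K_{X_t}+D_{\epsilon,t}=p_t^*(K_{Z_t}+\Delta_t)-\epsilon (D^{=1})_t$ is ample because ``the pullback minus a small multiple of an effective exceptional-supported divisor dominating $\mathrm{Ex}(p_t)$ can be made relatively ample.'' This is false for a dlt modification in general: there is no reason why $-\epsilon \tilde{D}^{=1}$ should be ample over $Z$ on a dlt blow-up (that property would mean the dlt modification already \emph{is} the relative log canonical model of the perturbed pair), and $D^{=1}$ also contains non-exceptional components (the strict transform of $\Delta^{=1}$), so the ``exceptional-supported'' heuristic does not even apply. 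Since $K_X+D_\epsilon\equiv_Z-\epsilon D^{=1}$, on a mere dlt modification this class is typically not even nef (intersect with a $p$-contracted curve meeting $D^{=1}$ but not contained in it). The paper fixes exactly this by a second step you are missing: after the dlt modification $\tilde p:\tilde X\to Z$, it takes $p:X\to Z$ to be the relative log canonical model over $Z$ of $\bigl(\tilde X,(1-\epsilon)\tilde D^{=1}+\tilde D^{<1}\bigr)$ via \cite{BCHM10} (with $X$ independent of small $\epsilon$ by \cite[Thm E]{BCHM10}); only then is $K_X+D_\epsilon\equiv_Z-\epsilon D^{=1}$ ample over $Z$, and adding $p^*(K_Z+\Delta)$, which is ample over $T$, gives ampleness over $T$ for $0<\epsilon\ll1$. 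Without this step \autoref{itm:stable} fails.

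Two further points. Your treatment of flatness and the fiberwise slc condition (``build over the generic point and spread out, using that the Koll\'ar condition is preserved'') only controls an open subset of $T$, whereas \autoref{itm:stable} must hold at every $t\in T$; the paper's actual argument is the substantial part of the proof: cut with general hypersurfaces through an arbitrary $t$, use crepant birational invariance of log canonicity and \cite[34]{dFKX} to show each component of $X_t$ has dimension $n$ with $X_t$ generically smooth there, deduce $X$ is CM (klt) and hence flat over the smooth base by the regular-sequence argument, and then restrict to a general curve and apply adjunction to get that $D_\epsilon$ avoids the codimension-one singular points of $X_t$ and that $(X_t,D_{\epsilon,t})$ is slc. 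Finally, your argument for \autoref{itm:variation} runs in the wrong direction: dlt modifications are not canonical, so ``isomorphic fibers of $(Z,\Delta)$ yield isomorphic modified fibers'' is both unjustified and, even if true, would only bound the new variation from above. What one needs is that an isomorphism $(X_t,D_{\epsilon,t})\cong(X_u,D_{\epsilon,u})$ forces $(Z_t,\Delta_t)\cong(Z_u,\Delta_u)$, which the paper gets from the canonicity of the log canonical model: for general $t$, $(Z_t,\Delta_t)$ is the log canonical model of $(X_t,D_t)$.
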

\begin{proof}
Let $\tilde{p}:\tilde{X}\to Z$ be a $\mathbb{Q}$-factorial dlt modification of $(Z,\Delta)$ \cite[1.36]{Kollar13} and write 
$$\tilde{p}^*(K_Z+\Delta)=K_{\tilde{X}}+\tilde{D}.$$ 
Denote by $\tilde{D}^{=1}=\lfloor \tilde{D} \rfloor$ and $\tilde{D}^{<1}=\tilde{D}-\tilde{D}^{=1}$. By \cite{BCHM10}, we may take $p : X \to Z$ to be  the relative log canonical model of $\left(\tilde{X},(1-\epsilon) \tilde{D}^{=1}+\tilde{D}^{<1}\right)$ over $Z$. Let $q: \tilde{X} \dashrightarrow X$  be the induced morphism and $D$, $D^{=1}$ and $D^{<1}$ the corresponding pushforwards. Define then $D_\epsilon:= (1-\epsilon) D^{=1} + D^{<1}$, whence \autoref{itm:klt} and \autoref{itm:effective} follows.  Note that by \cite[Thm E, p 414]{BCHM10}, $X$ is the same for all $0 < \epsilon \ll 1$.

Since $- \epsilon \tilde{D}^{=1} \equiv_Z K_{\tilde{X}} + (1- \epsilon)\tilde{D}^{=1}+\tilde{D}^{<1}$, we have that
$- \epsilon D^{=1} \equiv_Z K_{X} + D_\epsilon$ is ample over $Z$. Furthermore, 
\begin{multline}
\label{eq:compute_ampleness}
 K_{X}+D_\epsilon =q_* \left( K_{\tilde{X}} +(1-\epsilon) \tilde{D}^{=1}+\tilde{D}^{<1}\right)\\  = q_*  \left(q^* p^*(K_Z + \Delta) - \epsilon \tilde{D}^{=1}\right) = p^*(K_Z+\Delta)-\epsilon D^{=1}. 
\end{multline}
So, since  $K_Z+\Delta$ is ample over $T$,  $ K_{X}+D_\epsilon$ is ample over $T$ as well for $0 < \epsilon \ll 1$. A computation similar to \autoref{eq:compute_ampleness}, but with $D$ instead of $D_\epsilon$ shows \autoref{itm:equal}.

Let $n$ be the relative dimension of $Z$ over $Y$. To obtain (c), we need to show that $X$ is flat over $T$, $D_\epsilon$ does not contain any component of $X_t$ or any divisor in the singular locus of $X_t$ for any $t\in T$ and $(X_t, D_{\epsilon,t})$ is slc of dimension $n$ for all $t \in T$. We work on a neighborhood of an arbitrary point $t\in T$. 

To see the above statements, note first that if $W\subset X_t$ is a component, then $\dim W\ge  n$. Let $H_1,...,H_d$ be $d$ general hypersurfaces passing through $t$ where $d=\dim (T)$. Then as $(Z,\Delta+g^*(\sum^d_{i=1}H_i))$ is crepant birational to  $(X, D+f^*(\sum^d_{i=1} H_i))$, the latter is log canonical. As   $(X, D+f^*(\sum^d_{i=1} H_i))$ is log canonical at $W$, $f^* H_i$ are Cartier divisors and $W\subset \bigcap^d_{i=1}f^*H_i$, by \cite[34]{dFKX}, we know that $(X, f^*(\sum^d_{i=1} H_i))$ is snc at the generic point of $W$
and $W$ is not contained in $D_t$, which has the same support as $D_{\epsilon,t}$. Therefore $\dim W=n$ and $X_t$ is smooth at the generic point of $W$. In particular, $X_t$ is reduced and equidimensional.

 Since $(X,D_{\epsilon})$ is klt, then $X$ is Cohen-Maucaulay (see \cite[5.22]{KM98}). Since $f:X\to T$  is an equidimensional morphism and $T$ is smooth, $X_t$ is cut out by a regular sequence. In particular, $X_t$ is CM and $f$ is flat. Let $C=\bigcap^{d-1}_{i=1}H_i$, which is a smooth curve passing through $t$. Then $X_C\colon =X\times_T C$ is normal and 
 $$(X_C, D_{\epsilon,C}\colon= D_{\epsilon}\times_TC)$$
 satisfies that $(X_C, D_{\epsilon,C}+X_t)$ is log canonical by adjunction.
 This implies that $(X_C, D_{\epsilon,C})\to C$ is a KSBA family. Therefore, $D_\epsilon$ has to avoid the general point $\eta$ of any  codimensional one component  of the singular locus of $X_t$. Thus $(X_t, D_{\epsilon, t})$ is slc by  adjunction  and we conclude $(X,D_{\epsilon})\to T$ is a KSBA family over $T$.

To prove (e), just note that for a general $t \in T$, $(Z_t,\Delta_t)$ is the log canonical model of $(X_t, D_t)$. So, for general $t, u \in T$ and for $0 < \epsilon \ll 1$, $(X_t,D_{\epsilon,t}) \cong (X_u,D_{\epsilon,u})$ if and only if $(X_t,D_t) \cong (X_u,D_u)$, from which it follows that $(Z_t, \Delta_t) \cong (Z_u, \Delta_u)$. This shows \autoref{itm:variation}.

\end{proof}



\subsection{Bigness and nefness of relative canonical bundle}

We first collect some results about the push forwards of the powers of the relative canonical bundle. 

\begin{definition}
A torsion-free coherent sheaf $\sF$ on a normal variety $X$ is big, if for an ample line bundle $H$ on $X$, there is an integer $a>0$ and a generically surjective homomorphism $\bigoplus H \to S^{[a]}(\sF):=(S^a(\sF))^{**}$.
\end{definition}

\begin{theorem}
\label{thm:pushforward_big}
If $f : (X,D) \to T$ is a maximal variation KSBA stable family over a normal projective variety $T$ with klt general fibers, then $f_* \sO_X(r(K_{X/T}+D))$ is big for every sufficiently divisible integer $r>0$.
\end{theorem}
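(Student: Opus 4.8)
The plan is to reduce to the known positivity results for pushforwards of pluricanonical sheaves (in the style of Viehweg, Kollár, Campana, and more recently the Kawamata/Fujino-type weak positivity statements for log canonical pairs), using the maximal variation hypothesis to upgrade weak positivity to bigness. First I would fix a sufficiently divisible $r>0$ so that $r(K_{X/T}+D)$ is Cartier on the generic fiber and $f_* \sO_X(r(K_{X/T}+D))$ commutes with base change over a dense open $T^\circ \subseteq T$; this is where the Kollár condition on the family is used. By the theory of weak positivity for log canonical pairs (see e.g. \cite{Kollar13}, \cite{KP}, building on \cite{Vie95}), the sheaf $\sF := f_* \sO_X(r(K_{X/T}+D))$ is weakly positive over $T^\circ$. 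The maximal variation assumption then enters through the covering construction: one considers a suitable fiber power $(X^{(m)}, D^{(m)}) \to T$ and the associated sheaf, and shows that a sufficiently high symmetric power of $\sF$ has a nonzero map from a big sheaf, using that the moduli map $\mu$ is generically finite onto its image. Concretely, I would invoke the ampleness/bigness criterion of the form: a weakly positive sheaf $\sF$ on $T$ such that $\det \sF$ (or a determinant of a suitable tensor construction) is big, is itself big — and maximal variation guarantees exactly the bigness of this determinant via the Ambro--Kawamata--Viehweg-type argument identifying $\det \sF$ with (a multiple of) the pullback of an ample class under $\mu$.

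In more detail, the key steps in order are: (1) choose $r$ divisible enough that everything is well-defined and base-change compatible, and pass to a resolution / dlt model if needed so that $D$ is reduced-plus-klt in the sense of \autoref{p-dlt} (here one should be slightly careful that \autoref{thm:pushforward_big} is stated for klt general fibers, so one does not actually need the dlt blow-up, but reducing to a log-smooth model over $T^\circ$ is still convenient); (2) establish weak positivity of $\sF$ by the standard argument — base change to a generically finite cover to kill the variation of the boundary, apply a Fujita-type/semipositivity statement for the Hodge-theoretic or pluricanonical pushforward; (3) form fiber products $X \times_T \cdots \times_T X$ and use the diagonal to relate $S^{[N]}(\sF)$ for large $N$ to $f^{(m)}_* \sO(r(K+D))$ of the fiber power, which has "more" sections separating non-isomorphic fibers; (4) conclude bigness by combining weak positivity with the bigness of the determinant, the latter coming from maximal variation via the generic finiteness of $\mu$ and the ampleness of a CM-type class on the image.

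The main obstacle I expect is step (3)–(4): making precise how maximal variation forces a determinant or a symmetric power of $\sF$ to be big, rather than merely weakly positive. This is the classical heart of Viehweg's hyperbolicity / $Q_{n,n}$-type arguments, and in the log-canonical KSBA setting it requires the variation machinery of \cite{KP} together with a careful choice of the covering. A secondary technical point is that $T$ is only assumed normal projective (not smooth), so before applying semipositivity theorems one typically passes to a resolution $\widetilde T \to T$ and then descends bigness — bigness is preserved under pushforward along birational morphisms of the base, so this is routine but must be said. Everything away from $T^\circ$ (the locus where base change or the moduli map misbehaves) only affects the sheaf in codimension $\geq 1$ and hence does not affect bigness, which is why the statement is clean.
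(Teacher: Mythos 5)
The paper does not reprove this statement at all: its entire proof is a citation of \cite[Theorem 7.1]{KP}, where the bigness of $f_*\sO_X(r(K_{X/T}+D))$ for maximal-variation families with klt general fibers is established (via Koll\'ar's ampleness lemma applied to a classifying map, rather than via Viehweg's determinant-and-fiber-product machinery). Your proposal instead sketches a from-scratch argument in the classical Viehweg style, and the problem is that the sketch stops exactly where the theorem lives. Steps (1)--(2) (base-change compatibility, weak positivity over a dense open set) are standard, but steps (3)--(4) --- producing bigness of a determinant-type class from maximal variation, and then upgrading weak positivity of $\sF$ to bigness of $\sF$ --- are precisely the content of \cite[Theorem 7.1]{KP}, and you explicitly flag them as ``the main obstacle I expect.'' An argument whose crucial step is an acknowledged obstacle is not a proof; in the log/KSBA setting this step is genuinely delicate (it is why \cite{KP} devotes a whole section to it, and why the klt hypothesis on general fibers cannot be dropped, cf.\ the examples cited right after \autoref{thm:pushforward_big}).

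There is also a concrete mathematical slip in the criterion you propose to invoke: it is \emph{not} true in general that a weakly positive sheaf with big determinant is big. For instance $\sF=\sO_T\oplus B$ with $B$ big and nef is weakly positive and has big determinant, but no symmetric power of $\sF$ admits a generically surjective map from a direct sum of copies of an ample line bundle, since $\sO_T$ splits off as a direct summand of every $S^a(\sF)$. The true statements in Viehweg's theory are special to pushforward sheaves and go through the fiber-product trick (bigness of $\det f_*$ of the family forces bigness of the pushforward for the fiber power, and then of $f_*$ itself), so the fiber products you mention in step (3) are not optional bookkeeping but the mechanism that replaces the false general lemma. If you want a self-contained treatment you must carry out that mechanism, together with the identification of a big determinant class via the generically finite moduli map; otherwise the efficient and intended route is simply to quote \cite[Theorem 7.1]{KP}, as the paper does.
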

\begin{proof}This follows from \cite[Theorem 7.1]{KP}.
\end{proof}

\begin{remark}
Note that the klt assumption  in the above theorem cannot be weakened to log canonical according to \cite[Example 7.5-7.7]{KP}.
\end{remark}

\begin{theorem}
\label{thm:pushforward_nef}
If $f : (X,D) \to T$ is a KSBA stable family over a normal projective variety $T$, then $f_* \sO_X(r(K_{X/T}+D))$ is nef for every sufficiently divisible integer $r>0$.
\end{theorem}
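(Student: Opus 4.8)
The plan is to reduce the nefness statement to the semipositivity theorems for pushforwards of relative pluricanonical sheaves that are available in the KSBA setting, testing nefness on curves. So first I would recall the standard reduction: a torsion-free sheaf $\sF$ on a normal projective variety $T$ is nef if and only if its restriction $\sF|_C$ to every smooth projective curve $C$ mapping finitely to $T$ is nef (after passing to the normalization of a curve in $T$ and using that nefness can be checked on the normalization). Thus it suffices to show that for every morphism $\tau\colon C\to T$ from a smooth projective curve, the pullback $\tau^* f_*\sO_X(r(K_{X/T}+D))$, or at least the image of the natural base-change map into $(f_C)_*\sO_{X_C}(r(K_{X_C/C}+D_C))$, is a nef vector bundle on $C$.

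Next I would set up the base change. Let $X_C := X\times_T C$ with induced family $f_C\colon (X_C,D_C)\to C$; by the Koll\'ar condition (compatibility of the reflexive pluricanonical powers with base change), $\omega_{X_C/C}^{[r]}\cong(\omega_{X/T}^{[r]})_C$ for $r$ sufficiently divisible, so $\sO_{X_C}(r(K_{X_C/C}+D_C))$ is the base change of $\sO_X(r(K_{X/T}+D))$. If $C\to T$ factors through a point, the sheaf is trivial and there is nothing to prove; otherwise $(X_C,D_C)\to C$ is again a KSBA stable family over a smooth curve. Then the key input is the weak positivity / semipositivity of $(f_C)_*\omega_{X_C/C}^{[r]}$ for families of slc (not necessarily klt) stable pairs over a curve — this is the semipositivity theorem for stable families, due in this generality to the work underlying \cite{KP} (building on Koll\'ar, Fujino, and the weak positivity machinery). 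Over a smooth projective curve, weak positivity of a torsion-free, hence locally free, sheaf is equivalent to nefness, which gives what we want on $C$. Finally I would need that cohomology and base change behaves well enough that $\tau^* f_*\sO_X(r(K_{X/T}+D))$ agrees with $(f_C)_*\sO_{X_C}(\dots)$ up to a map that is generically an isomorphism and then argue that a generically surjective map to a nef bundle from a bundle on a curve forces the source, or rather the relevant quotient, to carry enough positivity; more cleanly, restrict $C$ to avoid the locus where base change fails so that the identification is an isomorphism over a dense open, and use that nefness is detected after such a finite modification.

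The main obstacle I expect is precisely the interaction of base change with the pushforward $f_*\sO_X(r(K_{X/T}+D))$: a priori $f$ need not be flat, $T$ need not be over the locus where $R^if_*$ vanishes, and the formation of $f_*\sO_X(r(K_{X/T}+D))$ need not commute with arbitrary base change to a curve, so one must either invoke the relevant flattening/cohomology-and-base-change results for stable families established in \cite{KP} (which guarantee that $f_*\omega_{X/T}^{[r]}$ is compatible with base change for $r$ sufficiently divisible, by the same mechanism that makes the Koll\'ar condition work), or argue only after a generically finite cover and reduce to a family where the base change statement is clean. Once that technical point is secured, the nefness is a direct consequence of the curve-by-curve criterion together with the semipositivity theorem for pushforwards of relative pluricanonical bundles of stable families over curves, which holds without the klt hypothesis (contrasting with \autoref{thm:pushforward_big}).
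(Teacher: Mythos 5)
Your proposal is essentially the paper's argument: the paper proves this statement by a direct citation of Fujino's semipositivity theorem \cite[Theorem 1.13]{Fujino12}, which is exactly the external semipositivity input your sketch ultimately relies on (and which holds for slc, not just klt, fibers). The curve-by-curve reduction and the base-change discussion you add are not needed there, since Fujino's theorem already gives nefness of $f_* \sO_X(r(K_{X/T}+D))$ over the normal projective base $T$.
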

\begin{proof}This follows from \cite[Theorem 1.13]{Fujino12}. \end{proof}

\begin{corollary}
\label{cor:CM_nef}
If $f : (X,D) \to T$ is a KSBA stable family over a normal projective variety $T$, then 
$$f_* \left((K_{X/T} + D)^{n+1} \right)$$ is nef.
\end{corollary}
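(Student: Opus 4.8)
The plan is to deduce nefness of the top self-intersection push-forward from the nefness of the lower push-forwards supplied by \autoref{thm:pushforward_nef}, by a determinant/Chern-class bookkeeping argument. First I would fix a sufficiently divisible $r > 0$ so that $\sF_r := f_* \sO_X(r(K_{X/T}+D))$ is nef (by \autoref{thm:pushforward_nef}) and $r(K_{X/T}+D)$ is a relatively very ample line bundle $A$ on $X$; note $X$ is $S_2$ of pure dimension, the fibers are $S_2$ and $G_1$ (indeed slc), and $T$ is normal, so the setup of Section \ref{ss-CM} applies. By the Knudsen--Mumford expansion and the Grothendieck--Riemann--Roch computation recalled there, $c_1(\lambda_{n+1}) = f_*(c_1(A)^{n+1}) = r^{n+1} f_*((K_{X/T}+D)^{n+1})$, so it suffices to show $\lambda_{n+1}$ — equivalently $\det f_{!}(A^{\otimes k})$ up to the contributions of $\lambda_n, \dots, \lambda_0$ — is a nef line bundle on $T$.

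Next, I would run the standard argument that the determinant of a nef sheaf is a nef line bundle, applied after twisting. For $k \gg 0$, $f_* \sO_X(A^{\otimes k}) = f_*\sO_X(kr(K_{X/T}+D))$ is again nef by \autoref{thm:pushforward_nef} (for each fixed such $k$, after possibly increasing divisibility), and $R^i f_* \sO_X(A^{\otimes k}) = 0$ for $i>0$ by relative ampleness of $A$ and Serre vanishing on the fibers (using that $A$ restricted to each fiber $(X_t, D_t)$ is ample and the fibers are $S_2$, so higher cohomology of high powers vanishes). Hence $\det f_{!}(A^{\otimes k}) = \det f_* \sO_X(A^{\otimes k})$, which is the determinant of a nef vector bundle of rank $h^0(X_t, A_{|X_t}^{\otimes k})$, hence nef. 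On the other hand this determinant equals $\lambda_{n+1}^{\binom{k}{n+1}} \otimes \lambda_n^{\binom{k}{n}} \otimes \cdots \otimes \lambda_0$. Taking $k = k_0, k_0+1, \dots, k_0 + n + 1$ for $k_0$ large and forming the appropriate $\mathbb{Z}$-linear combination of these $n+2$ nef classes (the combination that isolates $\lambda_{n+1}$ by inverting the nonsingular upper-triangular matrix of binomial coefficients $\binom{k}{j}$), we obtain that some positive multiple of $c_1(\lambda_{n+1})$ is a $\mathbb{Z}$-linear combination of nef classes with nonnegative coefficients; more carefully, one uses that each individual $\det f_!(A^{\otimes k})$ is nef and argues directly on a curve $C \to T$ that $\deg \lambda_{n+1}|_C \ge 0$ by letting $k \to \infty$ in $\deg \det f_!(A^{\otimes k})|_C = \binom{k}{n+1}\deg \lambda_{n+1}|_C + O(k^n) \ge 0$. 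This forces $\deg \lambda_{n+1}|_C \ge 0$ for every curve $C$, i.e.\ $\lambda_{n+1}$ is nef, and therefore $f_*((K_{X/T}+D)^{n+1}) = r^{-(n+1)} c_1(\lambda_{n+1})$ is nef.

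The main obstacle I anticipate is the verification that the Knudsen--Mumford formalism and the GRR identity $c_1(\lambda_{n+1}) = f_*(c_1(A)^{n+1})$ are legitimately available in this singular relative setting — i.e.\ that $f$ is flat with the required depth and $G_1$ conditions on the fibers so that $f_{!}$ is represented by a perfect complex and $\det$ behaves well, and that all relevant higher direct images vanish after twisting so that the determinant really is $\det$ of an honest nef bundle rather than a virtual object. Once that is in place, the curve-degree asymptotic argument is routine. An alternative, and perhaps cleaner, packaging is to invoke \autoref{lem:restr_max_var}: reduce to checking nonnegativity of $f_*((K_{X/T}+D)^{n+1}) \cdot C$ for curves $C$ via the fact that this class is $\mathbb{Q}$-effective with controllable support — but since nefness, unlike ampleness, does not need the maximal-variation hypothesis, the determinant-of-a-nef-bundle route above is the one I would write up.
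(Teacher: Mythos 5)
Your proposal is correct, but it takes a genuinely different route from the paper. You prove nefness of $\lambda_{n+1}$ directly: using \autoref{thm:pushforward_nef} for \emph{all} large multiples $kr$ (legitimate, since a multiple of a sufficiently divisible integer is sufficiently divisible), vanishing of higher direct images for $k\gg 0$ so that $\det f_!(A^{\otimes k})=\det f_*(A^{\otimes k})$ is the determinant of a nef locally free sheaf and hence nef, and then the Knudsen--Mumford asymptotics $\deg\det f_!(A^{\otimes k})|_C=\binom{k}{n+1}\deg\lambda_{n+1}|_C+O(k^n)\ge 0$ on curves to force $\deg\lambda_{n+1}|_C\ge 0$; finally you translate back via the GRR identity $c_1(\lambda_{n+1})=f_*(c_1(A)^{n+1})$, which is indeed available here (it is exactly what Section \ref{ss-CM} and the passage from \autoref{t-big} to \autoref{thm-main} rely on). The paper instead uses \autoref{thm:pushforward_nef} only once, for a single sufficiently divisible $r$: the embedding $X\subset\mathbb{P}_T\bigl(f_*\sO_X(r(K_{X/T}+D))\bigr)$ with $\sO(1)|_X\cong r(K_{X/T}+D)$ shows that $K_{X/T}+D$ is nef on the total space; then, after reducing to a curve base by base-change compatibility, $f_*\bigl((K_{X/T}+D)^{n+1}\bigr)$ has nonnegative degree because a top self-intersection of a nef class is a limit of effective cycles. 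The paper's argument is cycle-theoretic and sidesteps local freeness, vanishing, and determinant bookkeeping entirely, while yours stays within the CM/determinant formalism already set up in Section \ref{ss-CM}; both hinge on the same semipositivity input. Two small points: your parenthetical suggestion of inverting the binomial matrix would not work as stated (the inverse has negative entries, so you would not get a nonnegative combination of nef classes), but your asymptotic version is the correct fix and is what you actually use; and when testing on an arbitrary curve $C\subseteq T$ it is cleanest to restrict the nef sheaves $f_*\sO_X(kr(K_{X/T}+D))$ and the $\lambda_j$ from $T$ to $C$ (using base-change compatibility of the determinant of cohomology), rather than pulling back the log family to $C$, which would raise the same $\bQ$-Cartier pullback subtlety that the paper's own reduction to a curve also passes over quickly.
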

\begin{proof}
Since $f_* \left((K_{X/T} + D)^{n+1} \right)$ is compatible with base-change, and nefness is decided on curves, we may assume that $T$ is a curve. However, then we are supposed to only prove that $0 \leq \deg f_* \left((K_{X/T} + D)^{n+1} \right)$, which follows if we show that $(K_{X/T} + D)^{n+1}$ is the limit of effective cycles.  The latter statement follows from the nefness of $K_{X/T}+D$.

Thus it suffices to show that $K_{X/T}+D$ is nef.  Since there is an embedding 
$$X\subset \mathbb{P}_T(f_* \sO_X(r(K_{X/T}+D)))$$
for $r$ sufficiently large and 
$$\mathcal{O}(1)|_X\cong r(K_{X/T}+D),$$
then the nefness of $K_{X/T}+D$ is a straightforward consequence of Theorem \ref{thm:pushforward_nef}.

\end{proof}

\begin{proposition}If $f : (X,D) \to T$ is a maximal variation KSBA stable family  over a smooth projective variety $T$ such that the generic fiber $(X_t,D_t)$ is log canonical, then $K_{X/T}+D$ is big and nef for every sufficiently divisible integer $r>0$.

\end{proposition}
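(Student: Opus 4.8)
The plan is to prove nefness and bigness of $K_{X/T}+D$ separately. Nefness requires nothing new: the argument in the proof of \autoref{cor:CM_nef} applies verbatim, as it uses no variation hypothesis — for $r$ sufficiently divisible one has an embedding $X\subseteq\mathbb{P}_T\big(f_*\sO_X(r(K_{X/T}+D))\big)$ with $\sO(1)|_X\cong r(K_{X/T}+D)$, and since $f_*\sO_X(r(K_{X/T}+D))$ is nef by \autoref{thm:pushforward_nef}, so is $\sO(1)$ and hence its restriction $K_{X/T}+D$. So the real content is bigness, i.e.\ that $\mathrm{vol}_X(K_{X/T}+D)>0$.

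For bigness I would first pass to the klt situation by a dlt blow-up. Applying \autoref{p-dlt} to $f:(X,D)\to T$ — legitimate since $T$ is smooth and the generic fiber is log canonical — produces a proper birational $p:X'\to X$, a boundary $\Delta'$ with $K_{X'}+\Delta'=p^*(K_X+D)$, and for $0<\epsilon\ll1$ a klt boundary $\Delta'_\epsilon$ with $\Delta'-\Delta'_\epsilon\ge 0$, such that $f':(X',\Delta'_\epsilon)\to T$ is again a KSBA stable family, of maximal variation by \autoref{itm:variation} and with klt general fibers. Subtracting $f'^*K_T=p^*f^*K_T$ gives $K_{X'/T}+\Delta'=p^*(K_{X/T}+D)$, so $\mathrm{vol}_{X'}(K_{X'/T}+\Delta')=\mathrm{vol}_X(K_{X/T}+D)$, and since $\Delta'\ge\Delta'_\epsilon$ this is at least $\mathrm{vol}_{X'}(K_{X'/T}+\Delta'_\epsilon)$. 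Hence it suffices to show that $L:=K_{X'/T}+\Delta'_\epsilon$ is big on $X'$, and for this I may also use that $L$ is $f'$-ample.

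By \autoref{thm:pushforward_big}, $\mathcal{E}_r:=f'_*\sO_{X'}(rL)$ is a big sheaf on $T$ for every sufficiently divisible $r>0$, and it remains to convert this into bigness of the $\mathbb{Q}$-divisor $L$ itself. I would fix one such $r$, large and divisible, so that in addition $rL$ is relatively very ample, the pushforwards $f'_*\sO_{X'}(rkL)$ are locally free, and the generic fiber of $f'$ is normally generated with respect to $rL$; these are standard consequences of relative Kodaira vanishing on the klt fibers together with boundedness of the family of fibers. Bigness of $\mathcal{E}_r$ then provides an integer $a>0$, an ample divisor $H$ on $T$, and a generically surjective homomorphism $\bigoplus H\to S^{[a]}\mathcal{E}_r$. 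Composing with the multiplication map $S^{[a]}\mathcal{E}_r\to f'_*\sO_{X'}(raL)$ — generically surjective by normal generation — then pulling back along the flat morphism $f'$ and composing with the surjection $f'^*f'_*\sO_{X'}(raL)\to\sO_{X'}(raL)$, I obtain a generically surjective homomorphism $\bigoplus f'^*H\to\sO_{X'}(raL)$. Hence $raL\sim f'^*H+E$ for some effective $E\ge 0$, and for a small rational $\delta>0$ the $\mathbb{Q}$-divisor $(ra+\delta)L\sim\big(f'^*H+\delta L\big)+E$ is the sum of an ample $\mathbb{Q}$-divisor and an effective one; therefore $L$ is big, and the volume comparison above yields $\mathrm{vol}_X(K_{X/T}+D)>0$.

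The main obstacle — the only genuinely non-formal point — is this last step: extracting an effective divisor in $|\,raL-f'^*H\,|$ from the abstract bigness of the sheaf $\mathcal{E}_r$. It hinges on choosing $r$ so that both $S^{[a]}\mathcal{E}_r\to f'_*\sO_{X'}(raL)$ and $f'^*f'_*\sO_{X'}(raL)\to\sO_{X'}(raL)$ are generically surjective, which is exactly where one needs uniform very-ampleness, vanishing and normal-generation statements for the relatively ample $L$ to hold simultaneously on all fibers; these are available because the family of fibers is bounded.
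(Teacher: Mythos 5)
Your proposal is correct and follows essentially the same route as the paper: nefness via the embedding into $\mathbb{P}_T(f_*\sO_X(r(K_{X/T}+D)))$ and \autoref{thm:pushforward_nef}, bigness in the klt case by converting the sheaf bigness of \autoref{thm:pushforward_big} into a section of a multiple of $K_{X/T}+D$ minus (pullback of ample) plus a small relatively ample correction, and the log canonical case reduced to the klt case through \autoref{p-dlt}. The only differences are cosmetic: you perform the dlt reduction first rather than last, and you justify the generic surjectivity of the multiplication/evaluation maps with heavier tools (normal generation, boundedness) than the paper needs.
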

\begin{proof} We have shown the nefness of $K_{X/T}+D$ in the last proof. 

\bigskip

For the bigness, when the general fiber is klt, by Theorem \ref{thm:pushforward_big}, there is a sequence of generically surjective morphisms
$$f^*(\oplus H)\to f^*S^af_* \sO_X(r(K_{X/T}+D))\to\mathcal{O}_X( ar(K_{X/T}+D)),$$
for some $a$ and sufficiently divisible $r$. 
After replacing $a$ by its multiple, and tensoring $A:=r(K_{X/T}+D)$, we know that there is a nontrivial morphism 
$$  f^*(\oplus mH)\otimes  \mathcal{O}_X(r(K_{X/T}+D)) \to\mathcal{O}_X( (ma+1)r(K_{X/T}+D)) ,$$
which implies that $K_{X/T}+D$ is big. 

In general, applying Proposition \ref{p-dlt}, we know that we can find a birational model $h:Y\to X$, such that if we define $D_Y$ so that 
$$h^*(K_X+D)=K_Y+D_Y$$ 
holds, then there exists a divisor $D'_Y\le D_Y$ for which $(Y,D'_Y)\to T$ is a maximal variation of KSBA stable family with generic klt fibers. Thus $K_{Y/T}+D'_Y$ is big which implies $K_{X/T}+D$ is big. 
\end{proof}

\section{Proof of Main Theorems}

In this section, we prove Theorem \ref{t-big} and hence Theorem \ref{thm-main}.  By induction,  we may assume the base is of dimension $d$ and Theorem \ref{t-big} already holds when the base is of dimension at most $d-1$. Note that the $d=0$ case is tautologically true, hence the starting point of the induction is fine. 

\subsection{Log canonical case}

\begin{lemma}\label{l-lc}
Theorem \ref{t-big} is true if a general fiber $(X_t,D_t)$ is log canonical. 
\end{lemma}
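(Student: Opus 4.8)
The goal is to show that $f_*((K_{Z/T}+\Delta)^{n+1})$ is ample on $T$ when the general fiber is log canonical, under the inductive hypothesis that \autoref{t-big} holds for bases of dimension $\leq d-1$. The Nakai--Moishezon criterion reduces ampleness to checking positivity of intersection numbers against all subvarieties, so the strategy is: (i) reduce to the case where $T$ is smooth and the family has maximal variation, using \autoref{cor:extending_stable_log_families} together with the finiteness of fiber isomorphism classes and the behavior of the CM class under the generically finite base change $T'\to T$; (ii) on this maximal-variation family over a smooth $T$, invoke \autoref{p-dlt} to pass to a birational model $p\colon X\to Z$ with a klt pair $(X,D_\epsilon)$ and a KSBA family $f\colon(X,D_\epsilon)\to T$ of maximal variation, noting from \autoref{p-dlt}\autoref{itm:equal} that $K_X+D=p^*(K_Z+\Delta)$ so the CM classes agree; (iii) combine the bigness of $K_{X/T}+D$ (from the Proposition at the end of the excerpt, since the general fiber is lc) with the nefness of $K_{X/T}+D$ (from \autoref{cor:CM_nef}'s proof) to produce effectivity and positivity of $f_*((K_{X/T}+D)^{n+1})$.

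Concretely, after these reductions the class $L:=f_*((K_{X/T}+D)^{n+1})$ is nef by \autoref{cor:CM_nef}. To upgrade nef to ample via Nakai--Moishezon, I would fix a subvariety $V\subseteq T$ of dimension $e$ and show $(L^e\cdot V)>0$. Restricting the family over $V$ (after normalizing $V$ and doing a further generically finite base change if needed to restore the KSBA and maximal-variation hypotheses), I would use \autoref{lem:restr_max_var}: since $K_{X/T}+D$ is ample-ish — more precisely, writing $H=K_{X/T}+D$ (or a multiple making it a genuine divisor), $f_*(H^{n+1})$ is $\bQ$-linearly equivalent to an effective cycle whose support $S$ carries a family of maximal variation. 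If $S=T$ this already gives bigness of $L$; if $S\subsetneq T$ then $\dim S\leq d-1$ and the inductive hypothesis applied to $(X,D)\times_T S\to S$ gives ampleness of $L|_S$, hence $(L^{\dim S}\cdot[S])>0$, and one bootstraps. The interplay between "$L$ is big" (giving $L\sim_{\bQ} A+E$ with $A$ ample and $E$ effective) and "$L$ is nef" (so $L^e\cdot V\geq (L^{e-1}\cdot(A+E)\cdot V)\geq$ a positive term from $A$ plus nonnegative terms) is what closes the Nakai--Moishezon verification.

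The main obstacle I anticipate is step (iii)'s transition from bigness of the sheaf/divisor $K_{X/T}+D$ to positivity of the top self-intersection pushforward $f_*((K_{X/T}+D)^{n+1})$ on every subvariety, rather than just generic bigness on $T$. Bigness of $K_{X/T}+D$ as a divisor on the total space gives $f_*(\cdots)$ big as a sheaf on $T$, but to run Nakai--Moishezon one needs control on arbitrary subvarieties $V$, and a general $V$ need not be dominated by a "nice" subfamily preserving maximal variation — this is exactly why \autoref{lem:restr_max_var} is phrased in terms of an effective representative supported on a maximal-variation locus $S$, and why the induction on $\dim T$ is set up. The delicate bookkeeping is ensuring that when we restrict to $S$ (or to $V$), the hypotheses of \autoref{t-big} in lower dimension genuinely hold: normality of the base, the KSBA family condition (Koll\'ar condition / $\bQ$-Cartierness of $K+D$), and finite fiber isomorphism equivalence classes. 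I would handle normality by passing to normalizations, the KSBA condition by \autoref{p-dlt} and the base-change compatibility of $\omega^{[m]}$, and the finiteness of isomorphism classes by \autoref{lem:restr_max_var}'s conclusion that $f_S$ has maximal variation.
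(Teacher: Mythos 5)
Your reduction steps are in the spirit of the paper: by \autoref{cor:CM_nef} the class $L=f_*\left(A^{n+1}\right)$, $A:=K_{X/T}+D$, is nef; proper subvarieties $V\subsetneq T$ are handled by restricting the family and invoking the induction hypothesis (your step (i)/(ii), though note \autoref{cor:extending_stable_log_families} is not needed here -- finite fiber isomorphism classes is already a hypothesis and gives maximal variation -- and \autoref{p-dlt} enters only through the bigness/nefness Proposition for $K_{X/T}+D$). So everything comes down to the case $V=T$, i.e.\ to $\left(f_*\left(A^{n+1}\right)\right)^d>0$, and this is exactly where your argument has a genuine gap. Your assertion that ``bigness of $K_{X/T}+D$ on the total space gives $f_*(\cdots)$ big on $T$'' is not a valid implication -- a nef divisor class on $T$ can meet ample classes positively yet have vanishing top self-intersection (e.g.\ a pullback from a lower-dimensional variety), and establishing $L^d>0$ is precisely the content of the lemma, not a consequence of bigness upstairs. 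Your attempted repair also misuses \autoref{lem:restr_max_var}: that lemma requires $H$ to be \emph{ample on $X$}, whereas $K_{X/T}+D$ is only big, nef and relatively ample; moreover its output is an effective $(d-1)$-dimensional cycle on $T$, so the alternative ``$S=T$'' cannot occur and the promised ``bootstrap'' is never specified.

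The missing idea, which is the paper's actual argument, is a decomposition plus telescoping. Use bigness and nefness of $A$ to write $A=H+F$ with $H$ ample ($\bQ$-)divisor and $F\geq 0$, and expand $A^{n+1}=H^{n+1}+\sum_{k=0}^{n}H^{k}\cdot F\cdot A^{n-k}$. Apply \autoref{lem:restr_max_var} to the \emph{ample} $H$: $f_*\left(H^{n+1}\right)$ is $\bQ$-linearly equivalent to an effective $(d-1)$-cycle over whose support the family still has maximal variation, so the induction hypothesis (ampleness in dimension $d-1$) gives $\left(f_*\left(A^{n+1}\right)\right)^{d-1}\cdot f_*\left(H^{n+1}\right)>0$. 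For the error terms, each $f_*\left(H^{k}\cdot F\cdot A^{n-k}\right)$ is a limit of effective $(d-1)$-cycles, and for any $(d-1)$-dimensional smooth projective $P\to T$ the projection formula gives $\left(f_*\left(A^{n+1}\right)\right)^{d-1}\cdot P=\left(g_*\left(A_{|_P}^{n+1}\right)\right)^{d-1}\geq 0$ by the lower-dimensional case, so these terms contribute nonnegatively and $\left(f_*\left(A^{n+1}\right)\right)^{d}>0$ follows. Without this decomposition and the nonnegativity of the correction terms, your Nakai--Moishezon verification does not close.
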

\begin{proof} Set  $A:=K_{X/T}+D$ and $d:=\dim (T)$. According to \autoref{cor:CM_nef}, by the Nakai-Moishezon criterion and resolution of singularities, it is enough to show that when $T$ is a $d$-dimensional smooth projective variety and $(X,D)/T$ is a KSBA family of maximal variation,  then  $\left(f_*\left(A^{n+1}\right)\right)^d>0$. 

Write $A=H+F$ for some ample $\mathbb{Q}$-divisor $H$ and effective $\mathbb{Q}$-divisor $F$.
According to \autoref{lem:restr_max_var},  $ f_* \left(H^{n+1}\right)$ is linearly equivalent to a $d-1$ dimensional effective $\mathbb{Q}$-cycle over whose support $(X,D)$ has a maximal variation. Hence by induction we have
$$\left(f_*\left(A^{n+1}\right)\right)^{d-1} \cdot f_*\left(H^{n+1}\right)>0. $$ 
So we only need to show that for any $0\le k\le n$,
$$\left(f_*\left(A^{n+1}\right)\right)^{d-1} \cdot f_*\left(H^{k}\cdot F \cdot A^{n-k}\right)\ge 0.$$

This again follows from the induction since  $f_*\left(H^{k}\cdot F \cdot A^{n-k}\right)$ is a limit of effective $(d-1)$-dimensional $\mathbb{Q}$-cycles. For any $d-1$ smooth projective variety $P\to T$,
by the projection formula
$$\left(g_*\left(A_{|_P}^{n+1}\right)\right)^{d-1} =\left(f_*\left(A^{n+1}\right)\right)^{d-1} \cdot P,$$
where $A_{|_P}$ denotes the restriction of $A$ on $X\times_T P$, and $g: X\times_T P\to P$ the natural morphism.  
Then by the induction we have
$$\left(f_*\left(A^{n+1}\right)\right)^{d-1} \cdot P\ge 0,$$
and this implies what we need by resolution of singularities.
\end{proof}

\subsection{Semi-log canonical case}
Let $f:(Z,\Delta)\to S$ be a KSBA stable family over a normal variety $T$. Taking a normalization $f:X\to Z$, we get
$$(X,D)=\sqcup^m_{i=1} (X_i,D_i)\to T$$
with a conductor divisor $E$ and $D_i$ is the sum of the conductor divisor and the pull back $\Delta_{X_i}$ of $\Delta$. Furthermore, there is an involution $\tau: E^{\rm n }\to E^{\rm n}$ on the normalization $E^{\rm n}$ of $E$ which preserves the difference divisor ${\rm Diff}_{E^{\rm n}}(\Delta_X)$. In fact, we know there is one to one correspondence between
$(Z,\Delta)/S$ and $(X,D, E, \tau)/S$ as above (see \cite[Theorem 5.13]{Kollar13}). 

\begin{lemma}
\label{l:normalization}
Let $f:(Z,\Delta)\to S$ be a KSBA stable family over a normal variety $T$, Taking a normalization $f:X\to Z$, we get
$$(X,D)=\sqcup^m_{i=1} (X_i,D_i)\to T.$$
Then $f_i:(X_i, D_i)\to T$ is a KSBA stable family over $T$.
\end{lemma}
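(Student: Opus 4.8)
\textbf{Proof proposal for Lemma \ref{l:normalization}.}

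The plan is to verify the three requirements for $f_i \colon (X_i, D_i) \to T$ to be a KSBA stable family over the normal base $T$: that $f_i$ is proper and flat, that each fiber $(X_{i,t}, D_{i,t})$ is KSBA stable, and that $K_{X_i/T} + D_i$ is $\bQ$-Cartier; along the way one must also check that $D_i$ avoids the generic and codimension-one singular points of each fiber. First I would record the basic structure: since $(Z,\Delta) \to T$ is a KSBA stable family and normalization commutes with the (smooth, or at least geometrically nice) base change to fibers over a normal base — this is exactly the content of the correspondence between $(Z,\Delta)/S$ and $(X,D,E,\tau)/S$ in \cite[Theorem 5.13]{Kollar13}, applied in families — the fiber $X_{i,t}$ is the normalization of the corresponding component of $Z_t$, and $D_{i,t}$ is the conductor plus the pullback of $\Delta_t$. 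Thus $(X_{i,t}, D_{i,t})$ is obtained from the slc pair $(Z_t, \Delta_t)$ by the normalization procedure of \cite[5.13]{Kollar13}, hence is log canonical; and $K_{X_{i,t}} + D_{i,t}$ is the pullback of the ample $\bQ$-Cartier divisor $K_{Z_t} + \Delta_t$ under a finite morphism, hence ample. So each fiber is KSBA stable.

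Next I would address flatness and $\bQ$-Cartierness. For flatness, the normalization $X \to Z$ is finite, and since every fiber $X_t$ has the same dimension $n$ as $Z_t$ and $X$ is a disjoint union of normal varieties, $X \to T$ is equidimensional onto the normal (indeed one may reduce to smooth, by the resolution arguments used elsewhere, or argue directly) base $T$ with $S_2$ — in fact normal — total space; combined with the fact that the fibers are reduced of pure dimension $n$, miracle flatness gives that $f_i$ is flat. For the $\bQ$-Cartier property of $K_{X_i/T} + D_i$: by \autoref{eq:compute_ampleness}-style reasoning, or more directly by the construction, $K_{X_i/T} + D_i$ agrees fiberwise with the pullback of $K_{Z/T} + \Delta$, and one invokes that $K_{Z/T} + \Delta$ is $\bQ$-Cartier by hypothesis together with the compatibility of the reflexive powers $\omega^{[m]}$ under the finite base-change-friendly normalization map; alternatively, one can pass to a generically finite cover of $T$ and use \autoref{cor:extending_stable_log_families} to reduce to the maximal-variation case where $K_{X_i/T}+D_i$ has already been controlled, but I expect the direct argument via \cite[Theorem 5.13]{Kollar13} and \cite[4.4]{Kollar11}, \cite[Cor 25]{Kollar08} to be cleanest.

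Finally, I would check that $D_i$ avoids the generic points and the codimension-one singular points of each fiber $X_{i,t}$. The components of $D_i$ are the conductor divisor $E_i$ and the birational transform of $\Delta$; neither contains a component of any $X_{i,t}$ because both $(Z,\Delta)\to T$ is a KSBA family (so $\Delta$ avoids generic and codimension-one singular points of $Z_t$) and the conductor locus maps into the non-normal locus of $Z_t$, which has no component in common with the generic points of $X_{i,t}$. That $D_i$ avoids the codimension-one singular points of $X_{i,t}$ follows since $X_{i,t}$ is the normalization of $Z_t$ hence is $R_1$, i.e. regular in codimension one along the relevant locus after the families argument, using adjunction on a general curve through $t$ exactly as in the proof of \autoref{p-dlt}.

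\emph{Main obstacle.} The delicate point is the interaction of normalization with base change in families over a possibly singular normal base, i.e. making sure $X_{i,t}$ really is the normalization of the corresponding component of $Z_t$ and that $D_{i,t}$ is the expected divisor — equivalently, that the formation of $(X, D, E, \tau)$ from $(Z, \Delta)$ commutes with restriction to fibers. This is where one genuinely needs \cite[Theorem 5.13]{Kollar13} (and the Kollár condition on $\omega^{[m]}$) rather than a naive argument; everything else is routine once this compatibility is in hand, and if necessary one can first reduce to a smooth base by the resolution-of-singularities and base-change tricks already used repeatedly in this section.
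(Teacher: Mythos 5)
The central step of your proposal --- that ``$X_{i,t}$ really is the normalization of the corresponding component of $Z_t$ and $D_{i,t}$ is the conductor plus the pullback of $\Delta_t$'' --- is not just the delicate point, it is false in general, and \cite[Theorem 5.13]{Kollar13} does not (and cannot) deliver it: that theorem is the gluing correspondence for a fixed slc pair or family, not a statement that normalization commutes with restriction to fibers. A minimal counterexample to the compatibility you assume: take a family of irreducible one-nodal stable curves acquiring an extra node at $t=0$, so that locally at the new node $Z$ is $xy=t$. There the total space is already normal, the normalization map does nothing, and the fiber $X_{i,0}$ is nodal --- it is \emph{not} the normalization of $Z_0$, and $(X_{i,0},D_{i,0})$ is only slc, not log canonical. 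Consequently the chain of deductions built on this identification (fibers are lc, $K_{X_{i,t}}+D_{i,t}$ ample as a finite pullback, $D_i$ avoids the singular codimension-one points of the fibers because fibers are $R_1$) collapses. The flatness argument has a further problem: miracle flatness requires a Cohen--Macaulay total space and a regular base, and neither ``$X_i$ normal'' nor ``$T$ normal'' suffices; equidimensionality of fibers alone does not give flatness here.

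What is genuinely missing is the global numerical input that drives the paper's proof. The paper only proves directly, by the local computation $\hat{\mathcal{O}}_T[[x,y]]/(xy-a)$ with $a=0$ forced at points of the conductor, that $X\to T$ is smooth along $E$ in codimension one of the fibers; it does not attempt to verify the stable-family conditions fiber by fiber. Instead it normalizes each special fiber, $(\bar X_{i,0},\bar D_{i,0})$, and applies Koll\'ar's numerical stability criterion \cite[Section 13]{Kollar15}: $(K_{\bar X_{i,0}}+\bar D_{i,0})^n\ge (K_{\bar X_{i,s}}+\bar D_{i,s})^n$ for general $s$, with equality if and only if $(X_i,D_i)\to T$ is a stable family near $0$. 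Summing over $i$ and using that $(K_{Z_t}+\Delta_t)^n$ is constant because $(Z,\Delta)\to T$ is a stable family forces equality for every $i$. This volume-constancy argument is the heart of the proof and is entirely absent from your proposal; without it, or some substitute for it, the direct verification you outline cannot be completed.
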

\begin{proof}Let $0\in T$ be a closed point,  $(\bar{X}_{i,0},\bar{D}_{i,0})\to (X_{i,0},D_{i,0}) $ the normalization, and $\bar{D}_{i,0}$  the sum of the pull back of $D_{i,0}$ and the conductor divisor. Then $(\bar{X}_{i,0},\bar{D}_{i,0})$ is a KSBA stable pair.  

We first want to check $X\to T$ is flat at the codimension one point of $X_0$. In fact, to see this, by cutting the fiber using general hypersurfaces, we can assume that $Z\to T$ has relative dimension one, i.e., the fibers are nodal curves. 
Let $E$ be the conductor divisor of the normalization $g\colon X\to Z$. By definition, $g$ is isomorphic outside $E$. Furthermore, if a codimension one point $P$ of the fiber is contained in $E$, then $g(P)$ is a nodal point of the fiber, and analytically locally around $g(P)$, we can write it as $\hat{\mathcal{O}}_T[[x,y]]/ (xy-a)$ for some element $a\in \hat{\mathcal{O}}_T$. However, since $P\in E$, we conclude $a=0$,  which implies that $X\to T$ is smooth along $E$.

  
Thus we can apply the numerical stability in \cite[Section 13]{Kollar15} and conclude that for a general point $s\in T$, 
$$(K_{\bar{X}_{i,0}}+\bar{D}_{i,0})^n\ge (K_{\bar{X}_{i,s}}+\bar{D}_{i,s})^n .$$
Furthermore,  the equality holds if and only if $(X_i,D_i)\to T$ is a KSBA stable family over an open neighborhood of $0\in T$.

On the other hand, we know that
\begin{eqnarray*}
\sum^m_{i=1}(K_{\bar{X}_{i,s}}+\bar{D}_{i,s})^n  &= & (K_{Z_s}+\Delta_s)^n  \\
 & =&(K_{Z_0}+\Delta_0)^n\\
 & =&\sum^m_{i=1}(K_{\bar{X}_{i,0}}+\bar{D}_{i,0})^n,
\end{eqnarray*}
where the second equality follows from the fact that  $(Z,\Delta)$ is a stable family over $T$. Thus we conclude for each $i$,
$$(K_{\bar{X}_{i,0}}+\bar{D}_{i,0})^n= (K_{\bar{X}_{i,s}}+\bar{D}_{i,s})^n .$$ 
\end{proof}

\begin{remark}\label{r-normal}
We give a sketch of a more straightforward  argument for a weaker statement than Lemma \ref{l:normalization}, which says that there exists a proper dominant generically finite morphism $T'\to T$, such that the normalization 
$(X_i',D_i')$ of $(Z,\Delta)\times_T T'$ is a KSBA stable family over $T'$. This is enough for our calculation in the proof of Theorem \ref{t-big} for the general case. 

By generic flatness, there is an open set $T^0$, such that $(X^0_i,D^0_i):=(X_i,D_i)\times_T T^0\to T^0$ is a KSBA family over $T^0$. Applying \cite{AK00}, we can assume there is a proper dominant generically finite base change $g\colon T'\to T$ such that 
$$(X_i \times_T T', D_i \times_T T')$$ admits a weak semistable reduction. 
It follows from \cite[Theorem 1]{HX13} that by running a relative MMP of  $(X_i \times_T T', D_i \times_T T')$ over $T'$, we obtain a relative good minimal model $(X^{\rm m}_i,\Delta^{\rm m}_i)$. A similar argument as in the proof of Proposition of \ref{p-dlt} shows that $(X^{\rm m}_i,\Delta^{\rm m}_i)$ is flat over $T'$,  for any $t\in T'$, $\Delta^{\rm m}_{i,t}$ does not contain any component or codimension one singular point of $X^{\rm m}_{i,t}$, and $(X^{\rm m}_{i,t}, \Delta^{\rm m}_{i,t})$ is slc. 

Then the injectivity theorem (see \cite[Theorem 6.4]{Fujino13}) implies that the relative log canonical models $(X^{\rm c}_i,\Delta^{\rm c}_i)$ of $(X^{\rm m}_i,\Delta^{\rm m}_i)$ over $T'$ fiberwisely gives the log canonical model of $(X^{\rm m}_{i,t},\Delta^{\rm m}_{i,t})$. Furthermore, $(X^{\rm c}_i,\Delta^{\rm c}_i)$ is flat over $T'$ by Grauert's criterion. Thus $(X^c_i,\Delta^c_i) $ is a KSBA stable family over $T'$. 

Let $$(X',D')=\sqcup^m_{i=1} (X^{\rm c}_i,D^{\rm c}_i)\to T'.$$ Over $g^{-1}(T_0)$, $(X',D')$ extends the family of $(X,D)\times_{T}g^{-1}(T^0)$. 
Also we easily see both $E\times_{T} g^{-1}(T^0)$ and  the involution condition 
$$\tau\times_Tg^{-1}(T^0): (E\times_{T^0} g^{-1}(T^0))^{\rm norm}\to (E\times_{T^0} g^{-1}(T^0))^{\rm norm}$$ extend to corresponding data $E'$ and $\tau'$ over $T'$.

Thus 
$(X',D', E', \tau')/T'$ induces a KSBA stable family  $(Z',\Delta')/T'$ over $T'$ by \cite[Theorem 5.13]{Kollar13}. It satisfies that 
$$(Z',\Delta')\times_{T'} g^{-1}(T^0)= (Z,\Delta)\times_T g^{-1}(T^0).$$
By the separateness of the functor of KSBA stable family, we conclude that
$$(Z',\Delta')= (Z,\Delta)\times_T T',$$
as both  of them give KSBA stable families over $T$ which are isomorphic over the generic point. 
\end{remark}

\begin{proof}[Proof of \autoref{t-big}] 
We use the notations of \autoref{l:normalization}. Applying Proposition \ref{cor:extending_stable_log_families}, there is a smooth projective variety $T'$ with a generically finite morphism $T'$, with $m$ dominant morphisms $h_i : T'\to T_i$ to smooth projective varieties such that if we denote by $X_i' := X_i \times_T T'$  and  $ D_i':= D_i \times_T T'$, then
$$(X_i' ,  D_i') \cong (Y_i, E_i)\times_{T_i} T',$$
where $g_i : (Y_i,E_i)\to T_i$ are KSBA stable families over $T_i$ with finite fiber isomorphism equivalence classes and log canonical generic fibers. Furthermore, we have that
$$h : T'\to T_1\times\cdots \times T_m $$
is a generically finite morphism. 

According to \autoref{l-lc} and the induction on dimension, we know that $(g_i)_* \left( (K_{Y_i/T_i} + E_i)^{n+1} \right)$ is ample on $T_i$. 
Denote by $f_i' : X_i' \to T'$ and $p_i : T_1\times\cdots \times T_m  \to T_i$   the induced morphisms. Since $h$ is generically finite, and
\begin{multline*}
\left( f_* \left( (K_{X/T} + \Delta)^{n+1}  \right) \right)_{T'} = \sum_i (f_i')_* \left( ( K_{X_i'/T'} + D_i')^{n+1} \right) 
\\ = \sum_i h_i^* \left(g_{i*} \left( ( K_{Y_i/T_i} + E_i)^{n+1} \right) \right)= h^* \underbrace{\left( \sum_i p_i^* (g_i)_* \left( ( K_{Y_i/T_i} + E_i)^{n+1} \right)  \right)}_{\textrm{ample over $T_1\times\cdots \times T_m $}}
\end{multline*}
is big and nef on $T'$. Thus the above computation concludes the proof.
\end{proof}

\begin{proof}[Proof of Theorem \ref{thm-main}] We apply the Nakai-Moishezon criterion (cf. \cite{Kollar90}). So it suffices to check for any $d$-dimensional irreducible subspace $B\subset M^{\rm ksba}$, the top intersection $\lambda^d_{\rm CM}\cdot B>0$. 

By \cite[2.7]{Kollar90}, we can replace $B$ by a finite surjective base change $\pi: B'\to B$ such that $B'$ is normal and $B'\to M^{\rm ksba}$ lifts to $B'\to \mathcal{M}^{\rm ksba}$ where $\mathcal{M}^{\rm ksba}$ is the fine moduli  DM stack which parametrizes families of KSBA stable varieties. Thus there is a KSBA family of finite  fiber isomorphism equivalence classes $X/B'$. In particular,  
$$\lambda^d_{\rm CM}\cdot B=\frac{1}{\deg(\pi)}\lambda^d_{\rm CM}\cdot B'>0$$ by Theorem \ref{t-big}. 
\end{proof}

\begin{remark}A large part of our argument works in the log setting, i.e., for KSBA families of log pairs. However, due to the subtlety of the definition of the KSBA functor itself, we will not discuss it here. 
\end{remark}

\begin{bibdiv}
\begin{biblist}

\bib{Ale96}{incollection}{
    AUTHOR = {Alexeev, Valery},
     TITLE = {Moduli spaces {$M_{g,n}(W)$} for surfaces},
 BOOKTITLE = {Higher-dimensional complex varieties ({T}rento, 1994)},
     pages = {1--22},
 PUBLISHER = {de Gruyter, Berlin},
      YEAR = {1996},
 }

 \bib{AK00}{article}{
     AUTHOR = {Abramovich, Dan},
   AUTHOR = {Karu, Kalle},
      TITLE = {Weak semistable reduction in characteristic 0},
    JOURNAL = {Invent. Math.},
   FJOURNAL = {Inventiones Mathematicae},
     VOLUME = {139},
       YEAR = {2000},
     NUMBER = {2},
      PAGES = {241--273},
  }

\bib{BCHM10}{article}{
   author={Birkar, Caucher},
   author={Cascini, Paolo},
   author={Hacon, Christopher},
   author={McKernan, James},
   title={Existence of minimal models for varieties of log general type},
   journal={J. Amer. Math. Soc.},
   volume={23},
   date={2010},
   number={2},
   pages={405--468},
}

\bib{dFKX}{article}{
   author={de Fernex, Tommaso},
   author={Koll\'ar, J\'anos},
   author={Xu, Chenyang},
   title={The dual complex of singularities},
   journal={to appear in Adv. Stud. Pure Math., Professor Kawamata's 60th birthday volume,  arXiv:1212.1675},
   year={2012}
}

\bib{Don01}{article}{
author={Donaldson, Simon K.},
title={Scalar curvature and projective embeddings, I,},
journal={J. Differential Geom.},
volume={59},
pages={479-522},
year={2001},
}

\bib{FR06}{article}{
author={Fine, J.},
author={Ross, J.}
title={A note on positivity of the CM line bundle},
journal={Int. Math. Res. Not.},
year={2006},
note={Article ID95875},
pages={14 pages},
}

\bib{Fujino12}{article}{
author={Fujino, Osamu},
title={Semipositivity theorems for moduli problems},
journal={arXiv:1210.5784},
year={2012}
}

\bib{Fujino13}{article}{
author={Fujino, Osamu},
title={Injectivity theorems},
journal={ arXiv:1303.2404},
year={2013}
}

\bib{HX13}{article}{
  author={Hacon, Christopher},
   author={Xu, Chenyang},
   title={Existence of log canonical closures},
   journal={ Invent. Math.},
  date={2013},
   volume={192},
   pages={161-195},
   number={1}
}

\bib{KM98}{book}{
   author={Koll{\'a}r, J{\'a}nos},
   author={Mori, Shigefumi},
   title={Birational geometry of algebraic varieties},
   series={Cambridge Tracts in Mathematics},
   volume={134},
   note={With the collaboration of C. H. Clemens and A. Corti;
   Translated from the 1998 Japanese original},
  publisher={Cambridge University Press},
   place={Cambridge},
   date={1998},
   pages={viii+254},
}

\bib{Kollar90}{article}{
author={Koll{\'a}r, J{\'a}nos},
title={Projectivity of complete moduli},
journal={J. Differ. Geom.},
volume={32},
pages={235-268},
year={1990},
}

\bib{Kollar95}{article}{
author={Koll{\'a}r, J{\'a}nos},
title={Flatness criteria},
journal={J. Algebra},
volume={175},
number={2},
pages={715-727.},
year={1995},
}

\bib{Kollar08}{article}{
author={Koll{\'a}r, J{\'a}nos},
year={2008}
journal={arXiv:0805.0576}
title={Hulls and Husks}
}

\bib{Kollar11}{article}{
    AUTHOR = {Koll{\'a}r, J{\'a}nos},
     TITLE = {A local version of the {K}awamata-{V}iehweg vanishing theorem},
   JOURNAL = {Pure Appl. Math. Q.},
  FJOURNAL = {Pure and Applied Mathematics Quarterly},
    VOLUME = {7},
      YEAR = {2011},
    NUMBER = {4, Special Issue: In memory of Eckart Viehweg},
     PAGES = {1477--1494},
 }

\bib{Kol13}{incollection}{
    AUTHOR = {Koll{\'a}r, J{\'a}nos},
     TITLE = {Moduli of varieties of general type},
 BOOKTITLE = {Handbook of moduli. {V}ol. {II}},
    SERIES = {Adv. Lect. Math. (ALM)},
    VOLUME = {25},
     PAGES = {131--157},
 PUBLISHER = {Int. Press, Somerville, MA},
      YEAR = {2013},
}

 \bib{Kollar13}{book}{
    AUTHOR = {Koll{\'a}r,  J{\'a}nos},
     TITLE = {Singularities of the minimal model program},
    series= {Cambridge Tracts in Mathematics},
    VOLUME = {200},
      NOTE = {With a collaboration of S{\'a}ndor Kov{\'a}cs},
 PUBLISHER = {Cambridge University Press},
   ADDRESS = {Cambridge},
     YEAR = {2013},
   }

\bib{Kollar15}{article}{
    AUTHOR = {Koll{\'a}r, J{\'a}nos},
     TITLE = {Moduli of varieties of general type},
  note={Book in preparation}
      YEAR = {2015},
  }
  

\bib{KP}{article}{
  author={Kov\'acs, S\'andor} 
  author={Patakfalvi, Zsolt},
   title={Projectivity of the moduli space of stable log-varieties and subadditivity of log-Kodaira dimension
 },
 journal={arXiv:1503.02952},
  date={2015},
}

\bib{KSB}{article}{
author={Koll\'ar, J{\'a}nos},
author={Shepherd-Barron, N. I. },
title={Threefolds and deformations of surface singularities},
journal={Invent. Math. },
volume={91},
year= {1988},
number={2},
pages={299-338},
}

\bib{LWX15}{article}{
author={Li, Chi},
author={Wang, Xiaowei},
author={Xu, Chenyang},
title={Quasi-projectivity of the moduli space of smooth K\"ahler-Einstein Fano manifolds},
journal={arXiv:1502.06532},
year={2015},
}

\bib{Od10}{article}{
author= {Odaka, Yuji},
title={The Calabi Conjecture and K-stability},
journal={  Int. Math. Res. Not.},
date={2012},
volume={10},
pages={2272-2288},
}

\bib{PRS08}{article}{
author={Phong, D.H.},
author={Ross, Julius},
author={Sturm, Jacob},
title={Deligne pairing and the Knudsen-Mumford expansion},
journal={J. Differential Geom.},
volume={78},
year={2008},
number={ 3},
pages={475-496},
}

\bib{PT10}{article}{
author={Paul, Sean},
author={Tian, Gang},
title={CM stability and the generalized Futaki invariant II},
journal={Ast\'erisque},
 volume={ 328},
 pages={339-354},
 year={2010},}

\bib{Sch12}{article}{
author={Schumacher, Georg}
title={Positivity of relative canonical bundles and applications},
journal={Invent. Math.},
year={2012},
volume={190},
pages={1-56},
}



\bib{Tian97}{article}{
    AUTHOR = {Tian, Gang},
     TITLE = {K\"ahler-{E}instein metrics with positive scalar curvature},
   JOURNAL = {Invent. Math.},
  FJOURNAL = {Inventiones Mathematicae},
    VOLUME = {130},
      YEAR = {1997},
    NUMBER = {1},
     PAGES = {1--37},
}

\bib{Wa12}{article}{
author={Wang, Xiaowei},
title={Height and GIT weight. },
journal={Math. Res. Lett.},
volume={19},
year={2012},
number={4},
pages={906-926},
}

\bib{WX}{article}{
author={Wang, Xiaowei},
author={Xu, Chenyang}
title={Nonexistence of aymptotic GIT compactification},
journal={ Duke Math. J. },
volume={163},
issue={12},
pages={2217-2241},
year={2014}}

\bib{Vie95}{book}{
  AUTHOR = {Viehweg, Eckart},
     TITLE = {Quasi-projective moduli for polarized manifolds},
    SERIES = {Ergebnisse der Mathematik und ihrer Grenzgebiete (3)},
    VOLUME = {30},
 PUBLISHER = {Springer-Verlag, Berlin},
      YEAR = {1995},
 }



\end{biblist}
\end{bibdiv}

\bigskip

{Department of Mathematics, Princeton University, Fine Hall, Washington Road, NJ 08544-1000, USA

E-mail address: pzs@math.princeton.edu}

\bigskip

{Beijing International Center of Mathematics Research, 5 Yiheyuan Road, Beijing 100871, China

E-mail address: cyxu@math.pku.edu.cn}

\end{document}